\title{Extensions of invariant random orders on groups}
\author{Yair Glasner} 
\address{Ben-Gurion University of the Negev.
	Departement of Mathematics.
	Be'er Sheva, 8410501, Israel}
\author{Frank Lin}
\address{Ben-Gurion University of the Negev.
	Departement of Mathematics.
	Be'er Sheva, 8410501, Israel}
\author{Tom Meyerovitch}
\address{Ben-Gurion University of the Negev.
	Departement of Mathematics.
	Be'er Sheva, 8410501, Israel}
\crefname{theorem}{Theorem}{Theorems}
\crefname{thm}{Theorem}{Theorems}
\crefname{mainthm}{Theorem}{Theorems}
\crefname{lemma}{Lemma}{Lemmas}
\crefname{lem}{Lemma}{Lemmas}
\crefname{remark}{Remark}{Remarks}
\crefname{prop}{Proposition}{Propositions}
\crefname{defn}{Definition}{Definitions}
\crefname{corollary}{Corollary}{Corollaries}
\crefname{cor}{Corollary}{Corollaries}
\crefname{section}{Section}{Sections}
\crefname{figure}{Figure}{Figures}
\crefname{quest}{Question}{Questions}
\newcommand{\N}{\mathbf{N}}
\newcommand{\Z}{\mathbf{Z}}
\newcommand{\R}{\mathbf{R}}
\def\sW{{\mathscr{W}}}
\def\ord1{{\sqsubset}}
\newcommand{\RN}[1]{%
  \textup{\uppercase\expandafter{\romannumeral#1}}%
}
\newcommand{\wml}[1]{{\operatorname{wml}}_{\ord1}^{#1}}
\newcommand{\sml}[1]{{\operatorname{sml}}_{\ord1}^{#1}}
\newtheorem{thm}{Theorem}[section]
\newtheorem{lemma}[thm]{Lemma}
\newtheorem{prop}[thm]{Proposition}
\newtheorem{cor}[thm]{Corollary}
\newtheorem{quest}[thm]{Question}
\theoremstyle{definition}
\newtheorem{definition}[thm]{Definition}
\newtheorem{example}[thm]{Example}
\newtheorem{remark}{Remark}
\newcommand{\pOrd}{\operatorname{p-Ord}}
\newcommand{\Ord}{\operatorname{Ord}}
\newcommand{\Prob}{\operatorname{Prob}}
\newcommand{\IRO}{\operatorname{IRO}}
\newcommand{\pIRO}{\operatorname{p-IRO}}
\newcommand{\IRS}{\operatorname{IRS}}
\newcommand{\stab}{\operatorname{stab}}
\newcommand{\Sub}{\operatorname{Sub}}
\newcommand{\Ext}{\operatorname{Ext}}
\newcommand{\Cay}{\operatorname{Cay}}
\newcommand{\supp}{\operatorname{Supp}}
\newcommand{\trivgp}{\langle e \rangle}
\newcommand{\G}{\Gamma}
\newcommand{\SL}{\operatorname{SL}}
\newcommand{\norm}[1]{\left\lVert #1 \right\rVert}
\newcommand{\abs}[1]{\left\lvert #1 \right\rvert}
\begin{document}
\subjclass[2020]{20F60; 37A15}
\keywords{Orders on groups, Random orders, Amenability}
\thanks{The first and second authors were supported by ISF grant 2919/19.}
\thanks{The third and second authors were supported by ISF grant  1058/18.}
	\begin{abstract}
		In this paper we study the action of a countable group $\Gamma$ on the space of orders on the group.  In particular, we are concerned with the invariant probability measures on this space, known as \emph{invariant random orders}. We show that for any countable group the space of random invariant orders is rich enough to contain an isomorphic copy of any free ergodic action, and characterize the non-free actions realizable. We prove a Glasner-Weiss dichotomy regarding the simplex of invariant random orders. We also show that the invariant partial order on $\SL_3(\Z)$ corresponding to the semigroup generated by the standard unipotents cannot be extended to an invariant random total order. We thus provide the first example for a partial order (deterministic or random) that cannot be randomly extended.
	\end{abstract}	
	\maketitle
\tableofcontents	
\section{Introduction}
The origins of the theory of orderable groups goes back to the  end of the nineteenth century and the beginning of the
twentieth century. It continues to be an active area of research, mainly due to its connections
with many different branches of mathematics. In this paper we extend a  fruitful  and relatively modern theme in this theory: The study of  the \emph{space of orders on a group} from a topological and a dynamical point of view. The space of left-invariant orders on $\Gamma$ is a zero-dimensional compact Hausdorff topological space  on which $\G$ acts by conjugation \cite{sikora2004topologyOrderings}. The study of this action from the point of view of topological dynamics proved to be a powerful tool, especially in the case where this action has no fixed points, i.e. the group is not bi-orderable. See for example \cite{LW:loc_ind}. For an exposition and further historical background on this point of view, see for instance \cite{N:10}. Inspired by the success of this theory, we turn to investigate the action of $\G$ by left-multiplication on the space of all total orders on $\G$. This is a much larger space, whose  fixed points, if such exist, are the left invariant orders. 

The central objects of this paper are 
\emph{invariant random orders}, namely probability measures on the space of orders  whose distribution is invariant with respect to multiplication (say, from the left). The term ``invariant random orders'' appeared in \cite{AMR:21}, and we refer the reader to \cite{Kieffer:SMB,stepin1978equidistribution} for earlier applications, in particular in the context of entropy theory for actions of amenable groups. There are further  recent applications of invariant random orders in the context of entropy theory  \cite{raimundo2021GibbsSofic,downarowicz2021Multiorders}.

Our results can be considered as evidence for the dynamical and geometric richness of  the space of invariant random orders.

 The organization of the paper is as follows:  In \cref{sec:general} we introduce some definitions and notation, as well as basic structural results on the space of orders on a group and the associated action.  In \cref{sec:SL3Z_nonextendable}, we show that for non-amenable groups it not always possible to extend a left-invariant partial order to a left-invariant random (total) order. This answers a question posed in \cite{AMR:21}, where extendability of partial invariant random orders in the amenable case was resolved. More specifically,  we demonstrate  non-extendability as above  with respect to the left-invariant partial order on the group $\SL_3(\Z)$  corresponding to the semi-group generated by the standard unipotent matrices. In \cref{sec:specification} we establish the \emph{ specification property} for the space of orders on a countable group $\Gamma$.  In \cref{sec:ord_ergodic_universality} we prove that any free ergodic action of $\Gamma$ can be ``realized'' as an invariant measure on the space of total orders. As for non-free ergodic actions, we provide a sufficient condition, as well as a necessary condition. These two are not so far from each other. In \cref{sec:simplex_IRO} we prove that for groups $\Gamma$ that do not admit property (T) the space of invariant random orders is the Poulsen simplex, thus establishing a Glasner-Weiss type dichotomy for the simplex of invariant random orders. We conclude with open questions and some additional remarks. 

 \begin{remark}
 Subsequently to the first arXiv version of this paper, building upon our results, Andrei Alpeev proved that \emph{any} non-amenable countable group admits a partial invariant order that cannot be extended to an invariant random total order, thus showing that the IRO extension property characterizes amenable groups \cite{alpeev2022IRO_ext}. 
 \end{remark}

 \begin{remark}
     This is a revision of the published of version of the paper uploaded in July 2026. In the published version, the proof of \Cref{prop:specification_on_amenable_is_poulsen} was incorrect. See also \Cref{rem:continuous_selection} regarding continuous and Borel selection. We thank Todor Tsankov for bringing the matter to our attention.
 \end{remark}

\section{Definitions, and basic observations} \label{sec:general}
\subsubsection*{Partial and total orders.} 
Let $\Gamma$ be a countable group. We denote by $\pOrd(\Gamma)$ the set of partial orders on $\Gamma$, namely the set of  binary relations on $\Gamma$ that are transitive, non-reflexive and antisymmetric. 
 \begin{itemize}
     \item Antisymmetry: $x \prec y$ implies that $y \not \prec x$
     \item Non-reflexivity: $x \not \prec x, \ \forall x \in \Gamma$
     \item Transitivity: $x \prec y$ and $y \prec z$ imply $x \prec z$
 \end{itemize}
 $\Ord(\Gamma) \subset \pOrd(\Gamma)$ will denote the subset of total orders, namely orders for which every two group elements are comparable as described below:
\begin{itemize}
     \item Total antisymmetry: For every $x \ne y \in \Gamma$ either $x \prec y$ or $y \prec x$.
 \end{itemize}
 Both collections admit a natural compact metrizable topology, upon identifying them as closed subsets of the set of all binary relations $\{0,1\}^{\Gamma \times \Gamma}$ endowed with the Tychonoff (product) topology.
 In the above, a partial or total $\prec$ is identified with its indicator function $R_{\prec}:\Gamma \times \Gamma \rightarrow \{0,1\}$ by $x\prec y$. 
 
The group $\Gamma \times \Gamma$ acts, by homeomorphisms, from the left, on $\Ord(\Gamma)$ by:
$$x \left[(\gamma,\delta) \cdot \prec \right] y \ \ {\text{ if and only if }} (\gamma^{-1} x \delta) \prec (\gamma^{-1} y \delta).$$ 
In this paper, 
 whenever we refer to the action of $\Gamma$ on $\Ord(\Gamma)$ it will be implicitly understood that $\Gamma$ is acting via its identification with 
$\Gamma \times \trivgp < \Gamma \times \Gamma$. We denote  the set of $\G$-fixed points of $\Ord(\G)$, equivalently the set of \emph{left-invariant orders} by
\[\Ord(\Gamma)^{\Gamma} := \Ord(\Gamma)^{\Gamma \times \trivgp}.\] 

If $\Ord(\G)^\G$ is nonempty we say that $\G$ is left-orderable. The left invariant orders are exactly the orders satisfying $x\prec y \iff \gamma x \prec \gamma y, \ \forall x,y,\gamma \in \Gamma$. A softer, more probabilistic notion of left invariance that will be the focus of this paper is the following: 
\begin{definition}
    A (left) {\it{invariant random order}} on $\Gamma$, or an $\IRO$ for short, is a $\Gamma$-invariant Borel probability measure on $\Ord(\Gamma)$. We will denote by $\IRO(\Gamma)$ the collection of all invariant random orders on $\Gamma$. Similarly $\pIRO(\Gamma)$ will denote the collection of {\it{invariant random partial orders}}, defined as $\Gamma$-invariant probability measures on $\pOrd(\Gamma)$. 
\end{definition}
Both $\IRO(\Gamma)$ and  $\pIRO(\Gamma)$, endowed with the $w^{*}$ topology become compact metrizable spaces.
While many groups do not admit a left invariant order, every countable group admits an IRO. Here is one construction to have in mind. Consider the $\Gamma$ equivariant map $\Phi:[0,1]^{\Gamma} \rightarrow \Ord(\Gamma)$ sending $\{\omega_{\gamma} \ | \ \gamma \in \Gamma\}$ to the order on $\Phi(\omega)$ defined by the requirement that $x \Phi(\omega) y$ if and only if $\omega_x < \omega_y$. If $\lambda$ denotes the Lebesgue (or any other atomless probability) measure on $[0,1]$ and $\Lambda = \lambda^\G$ the corresponding product measure on $[0,1]^{\Gamma}$, then $\Phi(\omega)$ is well defined for $\Lambda$-almost every $\omega$ and $\Phi_{*}(\Lambda)$ is an IRO on $\Gamma$.
The above random order, which is sometimes called ``the uniform random order'', is uniquely characterized by the property that for any finite set $F \subset \Gamma$ the restriction of $\prec$ to $F$ is uniformly distributed among the $|F|!$ possible permutations of $F$.
An early appearance of the uniform random order in the context of entropy theory is due to Kieffer \cite{Kieffer:SMB}, where it was used to prove an asymptotic equipartition theorem for amenable groups (see also \cite{stepin1978equidistribution} and the earlier paper \cite{pickel1971}).

\subsubsection*{Extension of orders.}
Given a partial order $\prec \in \pOrd(\Gamma)$ we denote by 
$$\Ext(\prec) = \{\ll \in \Ord(\Gamma) \ | \ x \prec y \Rightarrow x \ll y, \ \forall x,y \in \Gamma \}$$ the collection of total orders that extend the given partial order. $\Ext(\prec)$ is a closed subset of $\Ord(\Gamma)$ and is $\Gamma$-invariant whenever $\prec$ is. We refer to any $\ll \in \Ext(\prec)^{\Gamma}$ as an {\it{extension of $\prec$}} to a total invariant order. Again we will be interested in softer, more probabilistic, notions for extensions of orders. 

\begin{definition} \label{def:ext}
Let $\prec \in \pOrd(\Gamma)^{\Gamma}$.  $\mu \in \IRO(\Gamma)$ will be called a \emph{random extension} of $\prec$ if $\mu(\Ext(\prec))=1$. More generally if $\nu \in \pIRO(\Gamma)$, We will say that $\mu \in \IRO(\Gamma)$ extends $\nu$ if there exists a $\Gamma$-invariant probability measure $\theta$ on $\left(\Ord(\Gamma) \times \pOrd(\Gamma)\right)$, so that: $\theta$ projects onto $\mu$ and $\nu$ under the two projections and $\ll  \in \Ext(\prec)$ for $\theta$ almost every $(\ll,\prec) \in \Ord(\Gamma)\times \pOrd(\Gamma)$.
\end{definition}
Recall that such a $\Gamma$-invariant probability measure $\theta$ admitting $\mu$ and $\nu$ as marginals, is called {\it{a joining}} of $\mu$ and $\nu$. It is well known that if $\mu,\nu$ are both ergodic, then every ergodic component of $\theta$ is also a joining. So that in ergodic case $\theta$ can be taken to be ergodic without loss of generality. 

Deterministic random orders are ``determistically'' extendable within the class of  torsion-free locally nilpotent groups:
\begin{thm} (Rhemtulla \cite{Rh:72}, Formanek  \cite{For:73})
Every invariant partial order on a torsion-free locally nilpotent group can be extended to an invariant total  order. 
\end{thm}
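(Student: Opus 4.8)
The plan is to rephrase the statement in terms of positive cones, reduce it to the finitely generated (hence nilpotent) case, and settle that case by induction along the lower central series; the arithmetic of extracting roots in a torsion-free nilpotent group is what makes the induction run, and it is also where the real work lies. Recall that a left-invariant partial order $\prec$ on a group $G$ is encoded by its positive cone $P_{\prec}=\{g\in G : e\prec g\}$: this is a subsemigroup of $G$ with $P_{\prec}\cap P_{\prec}^{-1}=\emptyset$ (and a \emph{normal} subsemigroup when $\prec$ is two-sided invariant), the order being recovered via $x\prec y\iff x^{-1}y\in P_{\prec}$ and being total precisely when $P_{\prec}\cup P_{\prec}^{-1}=G\setminus\{e\}$. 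Hence an invariant partial order with cone $P$ has an invariant total extension iff $P$ is contained in a (normal, in the bi-invariant case) subsemigroup $Q$ with $e\notin Q$ and $Q\cup Q^{-1}=G\setminus\{e\}$. A union of a chain of subsemigroups omitting $e$ again omits $e$, so Zorn's lemma provides a subsemigroup $Q\supseteq P$ maximal subject to $e\notin Q$, and the whole theorem reduces to showing that any such maximal $Q$ is automatically total.

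Suppose, for contradiction, that $Q$ is maximal but some $g\in G\setminus\{e\}$ satisfies $g,g^{-1}\notin Q$. By maximality $e$ lies both in the subsemigroup generated by $Q\cup\{g\}$ and in the one generated by $Q\cup\{g^{-1}\}$; each membership is witnessed by a finite word $e=w_1\cdots w_k$ in which $g$ (respectively $g^{-1}$) and finitely many elements of $Q$ occur. Let $H\le G$ be the subgroup generated by $g$ together with all group elements appearing in these two words. Then $H$ is finitely generated, hence nilpotent and torsion-free, $S:=Q\cap H$ is a subsemigroup of $H$ with $e\notin S$, and $e$ belongs both to the subsemigroup of $H$ generated by $S\cup\{g\}$ and to the one generated by $S\cup\{g^{-1}\}$. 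So everything reduces to the following assertion, which is the heart of the matter: \emph{in a finitely generated torsion-free nilpotent group $H$, for every subsemigroup $S$ with $e\notin S$ and every $g\ne e$, the identity cannot lie simultaneously in the subsemigroup generated by $S\cup\{g\}$ and in the one generated by $S\cup\{g^{-1}\}$} (and likewise for normal subsemigroups, to cover the bi-invariant case).

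I would prove this assertion by induction on the nilpotency class of $H$ — or, to streamline the bookkeeping, on its Hirsch length. The base case, $H$ abelian, is clean: a word putting $e$ into the subsemigroup generated by $S\cup\{g\}$ collapses to $g^{a}s=e$ with $a\ge 1$ and $s$ a product of elements of $S$, and $s=e$ is impossible since it would give $g^a=e$ in a torsion-free group; thus $g^{-a}\in S$, and symmetrically $g^{b}\in S$ for some $b\ge 1$, so $e=g^{ab}g^{-ab}\in S$, a contradiction. For the inductive step one descends to a quotient of $H$ by a central subgroup and pushes the two witnessing words down. The \textbf{main obstacle} is that the image of $S$ in the quotient need not omit the identity — $S$ may contain nontrivial central elements — so the inductive hypothesis does not apply on the nose. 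Resolving this is the technical core: one must separate off the central part of $S$, use the torsion-freeness of $H$ to extract the roots that relate $S$ to its saturation $\{h\in H : h^{n}\in S \text{ for some } n\ge 1\}$, and track the two words carefully through the quotient. Conceptually the cleanest framework is to pass to the Mal'cev $\Q$-completion of $H$, where roots are freely available, and carry out the descent there; the obstruction then migrates to controlling how the semigroup sits with respect to the (rationalized) central series.

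Finally, no hypothesis is superfluous. A non-identity torsion element is incomparable to $e$ in any invariant order, so a group with torsion admits no invariant total order whatsoever, while local nilpotency is exactly what makes the finitely generated subgroups nilpotent and so sustains the central descent above. For general countable groups this extension property fails badly — which is the point of \cref{sec:SL3Z_nonextendable}, where an invariant partial order on $\SL_3(\Z)$ is shown to admit no invariant random total extension at all, even though $\SL_3(\Z)$ carries an abundance of invariant random orders.
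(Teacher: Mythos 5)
The paper offers no proof of this statement: it is quoted from Rhemtulla and Formanek, so there is nothing in the text to compare your argument against, and it must be judged on its own. Your architecture is the standard one for this theorem: encode invariant orders by positive cones, use Zorn's lemma to reduce extendability to the assertion that for a subsemigroup $S$ with $e\notin S$ and $g\neq e$ at least one of the subsemigroups generated by $S\cup\{g\}$ and by $S\cup\{g^{-1}\}$ still omits $e$, localize to a finitely generated (hence nilpotent, by local nilpotency) subgroup via the two finite witnessing words, and induct. The Zorn reduction, the passage to the finitely generated case, and the abelian base case are all correct (two minor points: in the base case one should observe that each witnessing word must actually use $g$, respectively $g^{-1}$, at least once, since otherwise $e\in S$ outright; and in the bi-invariant version, where the cones are normal subsemigroups, the finitely generated subgroup $H$ must also contain the conjugators appearing in the witnessing words).

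The genuine gap is exactly where you flag it yourself: the inductive step is not carried out, and it is the entire mathematical content of the theorem. Descending to $H/Z$ for a central subgroup $Z$, the image of $S$ may contain the identity precisely when $S\cap Z\neq\emptyset$, so the inductive hypothesis does not apply; naming the toolbox --- separating off the central part of $S$, extracting roots, passing to the Mal'cev $\Q$-completion --- does not substitute for the argument that makes it work. Concretely, nothing in your proposal shows how to modify the two witnessing words, or how to replace $S$ by a larger semigroup still omitting $e$ (e.g., by adjoining roots or isolators of $S\cap Z$), so that the descent closes; this is the delicate combinatorial and arithmetic work of Rhemtulla's and Formanek's papers. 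As written, the proposal is a correct reduction of the theorem to its own hardest case, together with an accurate description of why that case is hard, rather than a proof.
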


It is well known that the conclusion of the above theorem fails if we relax the assumption of torsion-free locally nilpotent group for instance to the class of finitely generated torsion free solvable groups. For some examples and further references, see \cite{huang2016Extension}.
In contrast, extending invariant random partial  orders is possible under the much more general assumption of amenability. 
\begin{prop}[\cite{stepin1978equidistribution},\cite{AMR:21}]
Any invariant random partial order on an amenable group can be extended to an invariant random (total) order. 
\end{prop}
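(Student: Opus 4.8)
The plan is to combine the classical Szpilrajn order–extension theorem with the averaging available on an amenable group. The key observation framing the whole argument is that one cannot hope for a $\Gamma$-equivariant choice of linear extension: such a gadget would prove the statement for \emph{every} countable group, contradicting the non-extendability result for $\SL_3(\Z)$ in \cref{sec:SL3Z_nonextendable}. Hence amenability must enter precisely at the step where one passes from an arbitrary, non-equivariant extension to an invariant one, and the proof should make that transparent.

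First I would record a measurable, but \emph{a priori} non-invariant, extension map. Fixing an enumeration of $\Gamma$, Szpilrajn's theorem together with the usual greedy ``topological sort'' produces a Borel map $\sigma\colon\pOrd(\Gamma)\to\Ord(\Gamma)$ with $\sigma(\prec)\in\Ext(\prec)$ for every $\prec$; alternatively one invokes the Kuratowski--Ryll-Nardzewski selection theorem for the closed-nonempty-valued map $\prec\mapsto\Ext(\prec)$. Pushing a given $\nu\in\pIRO(\Gamma)$ forward under $\prec\mapsto(\sigma(\prec),\prec)$ yields a probability measure $\theta_0$ on $\Ord(\Gamma)\times\pOrd(\Gamma)$ whose second marginal is $\nu$ and which is concentrated on the set $E:=\{(\ll,\prec):\ll\in\Ext(\prec)\}$. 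One checks that $E$ is closed (for fixed $x,y$ the constraint ``$x\prec y\Rightarrow x\ll y$'' is clopen in the product, and $E$ is the intersection over all $x,y$) and that, since $\Ext(\gamma\cdot\prec)=\gamma\cdot\Ext(\prec)$, the set $E$ is invariant under the diagonal $\Gamma$-action.

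Next comes the amenable averaging. Let $K$ be the set of all probability measures $\theta$ on $\Ord(\Gamma)\times\pOrd(\Gamma)$ with second marginal $\nu$ and $\theta(E)=1$. Then $K$ is nonempty (it contains $\theta_0$), convex, weak-$*$ compact --- the marginal condition is weak-$*$ closed and affine, and $\theta\mapsto\theta(E)$ is upper semicontinuous since $E$ is closed --- and $\Gamma$-invariant, because $\gamma_*\theta$ has second marginal $\gamma_*\nu=\nu$ and $\gamma_*\theta(E)=\theta(\gamma^{-1}E)=\theta(E)$. By amenability one now produces a $\Gamma$-fixed point $\theta\in K$, either abstractly via the Day fixed point theorem for affine actions on compact convex sets, or concretely by passing to a weak-$*$ limit of the Ces\`aro averages $\tfrac{1}{|F_n|}\sum_{\gamma\in F_n}\gamma_*\theta_0$ along a left F\o lner sequence $(F_n)$, where the F\o lner property forces any limit point to be invariant and compactness of $K$ keeps the limit in $K$. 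This $\theta$ is a joining of $\pOrd(\Gamma)$ with its first marginal $\mu$, which lies in $\IRO(\Gamma)$ because $\theta$ is invariant, and $\theta(E)=1$ says exactly that $\ll\in\Ext(\prec)$ for $\theta$-a.e.\ $(\ll,\prec)$; hence $\mu$ extends $\nu$ as in Definition~\ref{def:ext}. (Taking $\nu=\delta_\prec$ for an honest invariant partial order $\prec$ gives $\mu(\Ext(\prec))=1$, the random-extension statement.)

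I do not expect a serious obstacle here: the only non-formal ingredients are the Borel/measurable selection $\sigma$ --- routine descriptive set theory --- and the verification that $E$ is closed and $K$ weak-$*$ compact, both of which are bookkeeping. The actual content of the proposition is the use of amenability to average away the dependence of $\sigma$ on the chosen enumeration of $\Gamma$; the ``hard part'', if any, is simply recognizing that this is where --- and the \emph{only} place where --- the hypothesis is used, which is also exactly why the argument must, and does, break down for non-amenable groups.
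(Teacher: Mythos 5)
Your argument is correct and is essentially the paper's: the paper's one-line proof sketch is exactly that the (non-empty, compact, convex) set of not-necessarily-invariant extensions of $\nu$ --- your set $K$ of joinings supported on $E$ with second marginal $\nu$ --- carries an affine $\Gamma$-action, so amenability yields a fixed point, with details deferred to \cite{AMR:21}. Your write-up just makes the two supporting facts (non-emptiness via a Borel selection of Szpilrajn extensions, and compactness/invariance of $K$) explicit.
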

A proof of the above proposition follows by observing that the space of (not necessarily invariant) extensions of a given random order is a non-empty simplex on which the group acts. See \cite{AMR:21} for details. 

It seems natural to wonder if the amenability assumption above is necessary \cite[Question $2.2$]{AMR:21}. In the current paper, we will present a result showing that in general, extension of partial orders to IRO's is not possible, at least for some non-amenable groups. In a different direction, special kinds of partial orders can be randomly extended in any group. 



\begin{prop}
Let $\Gamma$ be a countable group and let $\Delta < \Gamma$ be a subgroup. Any IRO on $\Delta$ (viewed as an invariant random partial order on $\Gamma$) can be extended to an IRO on $\Gamma$. 
\end{prop}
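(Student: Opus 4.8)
The plan is to realize the ``viewing'' in the statement as a co-induction, and then to extend the resulting invariant random partial order by stacking a uniform random order on the coset space on top of it. First I would fix a transversal $T \subseteq \Gamma$ for the left cosets $\Gamma/\Delta$, with $e \in T$, so that every $g \in \Gamma$ is written uniquely as $g = \tau(g)\beta(g)$ with $\tau(g) \in T$ and $\beta(g) \in \Delta$. Given an IRO $\mu$ on $\Delta$, i.e.\ a $\Delta$-invariant probability measure on $\Ord(\Delta)$, form $\Ord(\Delta)^{T}$ with the product measure $\mu^{\otimes T}$ --- writing points as $\omega = (\omega_{t})_{t\in T}$ --- and define $\Psi \colon \Ord(\Delta)^{T} \to \pOrd(\Gamma)$ by declaring $x \mathrel{\Psi(\omega)} y$ iff $\tau(x) = \tau(y)$ and $\beta(x) \mathrel{\omega_{\tau(x)}} \beta(y)$. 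In words, each left coset of $\Delta$ receives an independent $\mu$-distributed order, transported through its transversal element, and elements in distinct cosets are incomparable. Then $\nu := \Psi_{*}(\mu^{\otimes T})$ is the invariant random partial order on $\Gamma$ that the statement calls ``$\mu$ viewed as a partial order on $\Gamma$'' (its restriction to $\Delta$ is $\mu$), and the two things to prove are that $\nu \in \pIRO(\Gamma)$ and that $\nu$ extends to an IRO on $\Gamma$.

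For the first point, for each $\gamma \in \Gamma$ I would exhibit a measure-preserving transformation $S_{\gamma}$ of $(\Ord(\Delta)^{T}, \mu^{\otimes T})$ with $\gamma \cdot \Psi(\omega) = \Psi(S_{\gamma}\omega)$ for all $\omega$, which at once gives $\gamma \cdot \nu = \nu$. Unwinding the definition of the action --- using that $\tau(\gamma^{-1}x) = \tau(\gamma^{-1}y)$ iff $\tau(x) = \tau(y)$, and writing $\gamma^{-1}\tau(x) = \tau(\gamma^{-1}\tau(x))\, a(\gamma, \tau(x))$ with $a(\gamma, \cdot) \in \Delta$ --- one finds that $S_{\gamma}$ is the composition of (i) the permutation of the index set $T$ induced by the $\gamma^{-1}$-action on $\Gamma/\Delta$, and (ii) a coordinatewise twist, the $t$-th coordinate being left-translated by $a(\gamma, t)^{-1} \in \Delta$. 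Step (i) preserves $\mu^{\otimes T}$ because it is a bijection of $T$ and all the factors equal $\mu$; step (ii) preserves $\mu^{\otimes T}$ precisely because $\mu$ is left-$\Delta$-invariant. This is the step I expect to need the most care: the whole argument hinges on the fact that the cocycle correction produced when $\gamma$ shuffles the cosets is a \emph{left} translation by an element of $\Delta$, so that it is absorbed by exactly the invariance that an IRO on $\Delta$ has, and that the accompanying reindexing of coordinates is a bijection of $T$.

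To extend $\nu$, recall that, just as for the uniform random order on $\Gamma$ in \cref{sec:general}, any countable $\Gamma$-set carries a $\Gamma$-invariant uniform random order; applied to $\Gamma/\Delta$ it is realized by i.i.d.\ labels $\ell = (\ell_{t})_{t\in T} \in [0,1]^{T}$ with law $\lambda^{\otimes T}$, where $\lambda$ is atomless on $[0,1]$. On $(\Ord(\Delta)^{T} \times [0,1]^{T}, \mu^{\otimes T}\otimes\lambda^{\otimes T})$ I would then define $\widehat\Psi$ by ordering $\Gamma$ lexicographically with the cosets as the coarse coordinate: $x \mathrel{\widehat\Psi(\omega,\ell)} y$ iff either $\tau(x) \ne \tau(y)$ and $\ell_{\tau(x)} < \ell_{\tau(y)}$, or $\tau(x) = \tau(y)$ and $\beta(x) \mathrel{\omega_{\tau(x)}} \beta(y)$. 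Almost surely $\widehat\Psi(\omega,\ell) \in \Ord(\Gamma)$, and it extends $\Psi(\omega)$ by construction. The intertwining argument of the previous paragraph applies verbatim --- the coordinate permutation (i) now acting on the $\ell$-coordinates as a plain permutation and on the $\omega$-coordinates with the $\Delta$-twist --- so $\mu^{*} := \widehat\Psi_{*}(\mu^{\otimes T}\otimes\lambda^{\otimes T}) \in \IRO(\Gamma)$. Finally the map $(\omega,\ell) \mapsto (\widehat\Psi(\omega,\ell), \Psi(\omega))$ pushes $\mu^{\otimes T}\otimes\lambda^{\otimes T}$ to a $\Gamma$-invariant joining of $\mu^{*}$ and $\nu$ concentrated on $\{(\ll,\prec) : \ll \in \Ext(\prec)\}$, which is exactly the statement that $\mu^{*}$ extends $\nu$ in the sense of \cref{def:ext}. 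Once the bookkeeping of the second paragraph is settled, this last paragraph is routine.
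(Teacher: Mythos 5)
Your proof is correct, and it follows the same overall route as the paper's: order the cosets of $\Delta$ by i.i.d.\ atomless labels, refine lexicographically by an order inside each coset, and exhibit the extension through the explicit joining $(\omega,\ell)\mapsto(\widehat\Psi(\omega,\ell),\Psi(\omega))$. Where you genuinely depart from the paper is in the within-coset ingredient. The paper's map $\Phi\colon\Ord(\Delta)\times X\to\Ord(\Gamma)$ uses a \emph{single} sample $\prec$ from the IRO on $\Delta$ to order every coset, and its clause ``$g\prec h$ and $g^{-1}h\in\Delta$'' leaves unstated how $\prec$ compares two elements of a nontrivial coset; any way of making this precise requires a transversal, and once one is fixed, $\gamma$ acts on the order carried by the coset $t\Delta$ through a left translation by $a(\gamma,t)^{-1}\in\Delta$ that depends on $t$, so a single shared copy is not preserved in distribution (before applying $\gamma$ the cosets carry identical orders, afterwards they carry distinct deterministic translates of one another). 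Your co-induced version --- independent $\mu$-distributed copies on the cosets, transported by the transversal --- is exactly what absorbs this $t$-dependent twist, since each coordinate is independently $\mu$-distributed and $\mu$ is left-$\Delta$-invariant; the cocycle computation in your second paragraph is the verification the paper omits. What your route buys is a precise meaning for ``viewed as an invariant random partial order on $\Gamma$'' (namely the co-induction, whose restriction to $\Delta\times\Delta$ is $\mu$) together with an actually checked $\Gamma$-invariance, at the cost of a slightly heavier construction; the paper's version is shorter, but as written its asserted equivariance of $\Phi$ does not go through without a modification of the kind you make.
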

\begin{proof}
The proof generalizes our prior construction of the uniform random order.  Let $\mu_0 \in \IRO(\Delta)$. Let $\Lambda = \lambda^{\Gamma/\Delta}$ be Lebesgue measure on $[0,1]^{\Gamma/\Delta}$ (namely the product of the Lebesgue measure taken in each coordinate), and let $X \subset [0,1]^{\Gamma/\Delta}$ denote the subspace of injective functions (so $x_{g \Delta} = x_{h \Delta}$ implies $g^{-1} h \in \Delta$ for all $x \in X$, $g,h  \in \Gamma$).
Then $\lambda^{\Gamma/\Delta}(X)=1$, so $\lambda^{\Gamma/\Delta}$ can be regarded as a probability measure on $X$.
We define a function $\Phi: \Ord(\Delta) \times X \to \Ord(\Gamma)$ as follows:
$\tilde \prec = \Phi(\prec,x)$ is given by
\[
g \tilde \prec h {\text{ if and only if }} 
 x_{g \Delta} < x_{h \Delta} \mbox{ or } 
(g \prec h \mbox{ and }  g^{-1}h \in \Delta) 
\]
\noindent $\tilde \prec \in \Ord(\Gamma)$ for every $\prec \in \Ord(\Delta)$ and $x \in X$. Also it is easy to verify that $\Phi$ is a $\Gamma$-equivariant map and that  $\tilde \prec$ extends $\prec$.

Now define $\tilde{\Phi}: \Ord(\Delta) \times X \to \Ord(\Delta) \times \Ord(\Gamma)$  by $\tilde{\Phi}(\prec,x) = (\prec, \Phi(\prec,x))$ and set $\theta = \tilde{\Phi}_{*}(\mu_0 \times \Lambda)$. All the properties mentioned in the end of the last paragraph show that $\theta$ is the joining needed in order to define and extension of $\prec$ to an IRO on $\Gamma$, according to \cref{def:ext}.
\end{proof}

\subsubsection*{Dynamical pasts and semigroups}

We now recall a simple and well known correspondence between left-invariant orders and semigroups not containing the identity, and observe that this correspondence can be meaningfully extended to a $\Gamma\times \Gamma$-equivariant bijective correspondence between $\pOrd(\Gamma)$ and a space we refer to as ``dynamical pasts''.

 A left invariant (partial) order on $\Gamma$ is uniquely determined by the semigroup of positive elements $\Phi_{<} := \{x \in \Gamma \ | \ e < \gamma \}$. Conversely, any semigroup $S$ in $\Gamma$ that does not contain the identity gives rise to a partial $\Gamma$-invariant order $<_S$, given by $x <_S y \iff x^{-1}y\in S$. The above correspondences define a bijection between left invariant orders on $\Gamma$ and semi-groups of $\Gamma$ that do not contain the identity. A semigroup $S$ not containing the identity  corresponds to a left invariant total order if and only if it has the additional property that  $\Gamma = S \sqcup \{e\} \sqcup S^{-1}$. Such semigroups  are known as {\it{algebraic pasts}}. A semigroup $S$ not containing the identity  corresponds to a bi-invariant order if and only if it the group $\Gamma$ normalizes $S$ in the sense that $g s g^{-1} \in S$ for every $s \in S$ and $g \in \G$. 
 
 There is a natural bijection $\Psi:\{0,1\}^{\Gamma \times \Gamma} \to (\{0,1\}^\Gamma)^\Gamma$ between the space $\{0,1\}^{\Gamma \times \Gamma}$ of binary relations on $\Gamma$ and the space $(\{0,1\}^\Gamma)^\Gamma$
 of functions from $\Gamma$ to the space $\{0,1\}^\Gamma$, which we naturally identify as the space of functions from $\Gamma$ to the space of subsets of $\Gamma$. This bijection is futhermore a homeomorphism (where the topology on $\{0,1\}^{\Gamma \times \Gamma}$ is the product of $\G \times \G$ copies of $\{0,1\}$ with the discrete topology  and the topology on $(\{0,1\}^\Gamma)^\Gamma$ is the ``iterated'' product topology). 
 For $\prec \in \pOrd(\Gamma)$ the image $\Psi_\prec$ can be written as follows:
 \[
 \Psi_{\prec}(x) =  \left\{y \in \Gamma \ | \ x \prec  y \right\},\ x\in \Gamma.
 \]
 For $\prec \in \pOrd(\Gamma)$ and $x \in \Gamma$, the set $\Psi_{\prec}(x) \subset \Gamma$ can be thought of as \emph{the past of $x$} with respect to $\prec$.
 On the space $(\{0,1\}^\Gamma)^\Gamma$ we have a natural self-homeomorphism $\phi \mapsto \tilde \phi$, given by
 \[ \tilde\phi(x) := x^{-1}\phi(x)= \{ x^{-1}y \in \Gamma \ |  y \in \phi(x)\}.\]
 
 The composition of $\Psi$ and the self-homeomorphism above gives another bijection $\Phi$ between $\{0,1\}^{\Gamma \times \Gamma}$ and  $(\{0,1\}^\Gamma)^\Gamma$, namely,  $\Phi:\{0,1\}^{\Gamma \times \Gamma} \to (\{0,1\}^\Gamma)^\Gamma$.
 For $\prec \in \pOrd(\Gamma)$ we can write:
 \[\Phi_{\prec}(x)  =  \left\{\gamma \in \Gamma \ | \ x \prec x \gamma\right\}, \ x\in \Gamma.
 \]
 For $x \in \Gamma$ and $\prec \in \pOrd(\Gamma)$, the set $\Phi_{\prec}(x) \subset \Gamma$ can be thought of as ``the directions pointing to the past from $x$''. 
 
 A  function $S:\Gamma \rightarrow \{0,1\}^\Gamma$  is in the image of $\pOrd(\Gamma)$ under $\Phi$ if and only if it satisfies the following conditions:

\begin{itemize}
    \item Antisymmetry: For every $x,\gamma \in \Gamma$ with $e \ne \gamma$ at most one of the conditions $\gamma \in S(x)$, $\gamma^{-1} \in S(x \gamma)$ can hold. 
    \item Non-reflexivity: $e \not \in S(x), \ \forall x \in \Gamma$
    \item Transitivity: $\gamma S(x\gamma) \subset S(x), \ \forall \gamma \in S(x)$. 
    \end{itemize}
A function  $S:\Gamma \rightarrow \{0,1\}^\Gamma$  in the image of $\Ord(\Gamma)$  under $\Phi$ satisfies the following property in addition:
\begin{itemize}
    \item Total antisymmetry: For every $x,\gamma \in \Gamma$ with $e \ne \gamma$ exactly one of the conditions $\gamma \in S(x)$ or $\gamma^{-1} \in S(x\gamma)$ holds. 
\end{itemize}

For a left invariant (total) order  $\prec$, $\Phi_\prec(x)$ is independent of $x$, and is precisely the semigroup (algebraic past) corresponding to $x$. 

\begin{example}\label{example:Z_2_ext}
Let $P = \Z_+^n \setminus \{0\} \subset \Z^n$ denote the semigroup corresponding to the positive ortant in $\Z^n$ with zero removed. Then $P$ defines an invariant partial order on $\Z^n$, that we denote by $\ord1$.
We have
\begin{equation} \label{eqn:eprec}
\Ext(\ord1) = \{\ll \in \Ord(\Z^n) \ | \ P \subset \Phi_{\ll}(\overline{x}), \ \forall \overline{x} \in \Z^n\}.
\end{equation}
Given a function $u:\Z^n \to \mathbb{R}$ which is injective and strictly monotone with respect to $\ord1$ in the sense that $u(x) < u(y)$ whenever 
$y-x \in P$, we can define an element $\prec_u \in  \Ext(\ord1)$ by $x \prec_u y$ iff $u(x) < u(y)$. In fact, this is true in general: For any countable group $\Gamma$, any injective function $u:\Gamma \to \mathbb{R}$ defines a total order $\prec_u \in \Ord(\Gamma)$  as above. Given $\ord1 \in \pOrd(\Gamma)$ we have that $\prec_u \in \Ext(\ord1)$ if and only if $u:\Gamma \to \mathbb{R}$ is $\ord1$-monotone. Conversely, any $\prec \in \Ext(\ord1)$ is of the form $\prec = \prec_u$ for some injective, $\ord1$-monotone function. Composing $u$ from the left with a strictly increasing function from $\mathbb{R}$ to $\mathbb{R}$ does not change the resulting order $\prec_u$. In the case $\Gamma = \Z^n$ we can assume without loss of generality that the function $u:\Z^n \to \mathbb{R}$ is the restriction of some continuous function $\tilde U:\mathbb{R}^n \to \mathbb{R}$ (or even $1$-Lipschitz, piecewise linear, smooth and so on) having the property that each level set of $\tilde u$ intersects $\Z^n$ in at most $1$ point. The level lines of $\tilde u$  (at least in the piece-wise smooth case), which  are $n-1$-dimensional surfaces, uniquely determine $\tilde u$. At least in the case $\Gamma=\Z^n$ this point of view provides a method of visualizing elements of $\Ext(\ord1)$.
For instance, \cref{fig:cylord} shows the level lines of a function corresponding to some element of $\Ext(\ord1)$, restricted to some bounded square region in  $\mathbb{R}^2$, where the larger bold point represents the zero vector, and the smaller bold points represent other elements of $\Z^2$.
\begin{figure}[ht]
    \centering
    \includegraphics[width=5cm]{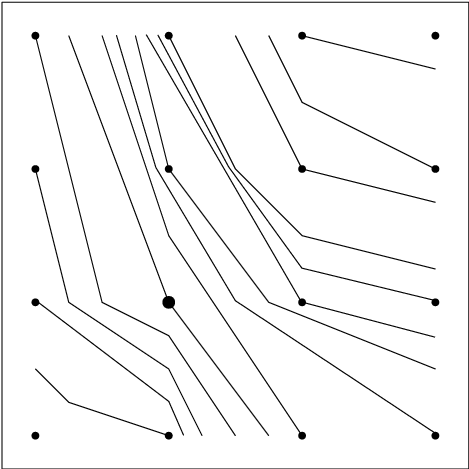}
    \caption{The level lines of a function corresponding to an element of  $\Ext(\ord1)$, restricted to a finite window.}
    \label{fig:cylord}
\end{figure}

We briefly  recall that  the standard and well-known classification of invariant  total orders on $\Z^n$ and identify which of these are elements of $\Ext(\ord1)$: To any invariant total order on $\Z^n$   there is an associated $(n-1)$-dimensional linear subspace of $\mathbb{R}^n$. An $(n-1)$-dimensional linear subspace of $\mathbb{R}^n$ which does not contain any non-zero integral points corresponds to exactly two invariant total orders,  whose corresponding algebraic pasts are the two half-planes in the complement of $V$, intersected with $\Z^d$. Such an element corresponds to $\Ext(\ord1)$ if and only if the corresponding half-plane cotains $P$.
The algebraic pasts corresponding to an $(n-1)$-dimensional linear subspace $V$ of $\mathbb{R}^n$ which contains a  subgroup $\Gamma_0$ of $\Z^n$ of rank $k$ for some $k \le n -1$, are exactly the union of an algebraic past of $\Gamma \cong \Z^k$ (which we can identify inductively) and one of the half-planes in the complement of $V$.

In the case of an invariant total order $\prec \in \Ext(\ord1)$ corresponding to an irrational subspace $V$, the ``level surfaces'' could be taken as affine spaces parallel to $V$. In the case of a subspace $V$ containing non-zero rational points, one has to perturb the level surfaces slightly so that each level surface contains at most one integral point. 
\end{example}

\begin{example}\label{example_heisenberg}
The discrete Heisenberg group is given by:
\[H =  \left\{ \left. [a,b,c]:=\begin{pmatrix} 1&a&c\\0&1&b\\0&0&1\end{pmatrix} \right| a,b,c \in \Z\right\}.\]
The discrete Heisenberg group $H$ is generated by the three matrices
\begin{equation}
\begin{array}{ccc}
x=\begin{pmatrix}
	1 & 1 & 0\\
	0 & 1 & 0\\
	0 & 0 & 1
\end{pmatrix} & 
z=\begin{pmatrix}
1 & 0 & 1\\
0 & 1 & 0\\
0 & 0 & 1
\end{pmatrix} & 
y=\begin{pmatrix}
	1 & 0 & 0\\
	0 & 1 & 1\\
	0 & 0 & 1
\end{pmatrix}
\end{array}
\end{equation}
The matrix $z$ is the commutator of $x$ and $y$, and it commutes with each of them.  These relations give a presentation of the  discrete Heisenberg group:
\[H = \left \langle x,y,z \ | \ [x,z]=[y,z]=[x,y]z^{-1} \right \rangle. \]
The following well known formula can be verified by a straightforward induction: 
\begin{equation} \label{eqn:hcomm}
    [x^k,y^l]=z^{kl}, \quad \forall k,l \in \Z
\end{equation} 
The vertices of the Cayley graph $C = \Cay(H,\{x,y,z\})$ of $H$ are naturally identified with $\Z^3 \subset \R^3$. 
Let $P \subset H$ denote the semigroup of matrices in $H$ whose entries are all non-negative, exluding the identity matrix. Then $P$ is precisely the semigroup generated by $x,y,z$. Let $\ord1$ denote the partial left-invariant order corresponding to $P$.
For any $[A,B,C] \in H$, and $\prec \in \Ext(\ord1)$ we know that $\Psi_{\prec}([A,B,C])$ is bound to contain $[A,B,C]P$, which assumes the form of a skewed cone
\[[A,B,C] P = \{[A+a,B+b,C+c+Ab] \ | \ a,b,c \in \Z_{\ge 0} \} \subset \Psi_{\prec}([A,B,C]).\]
Examples of such sheared cones, including $P$ itself at the origin, appear in \cref{fig:cones}. 
\begin{figure}[ht]
    \centering
    \includegraphics[width=9cm]{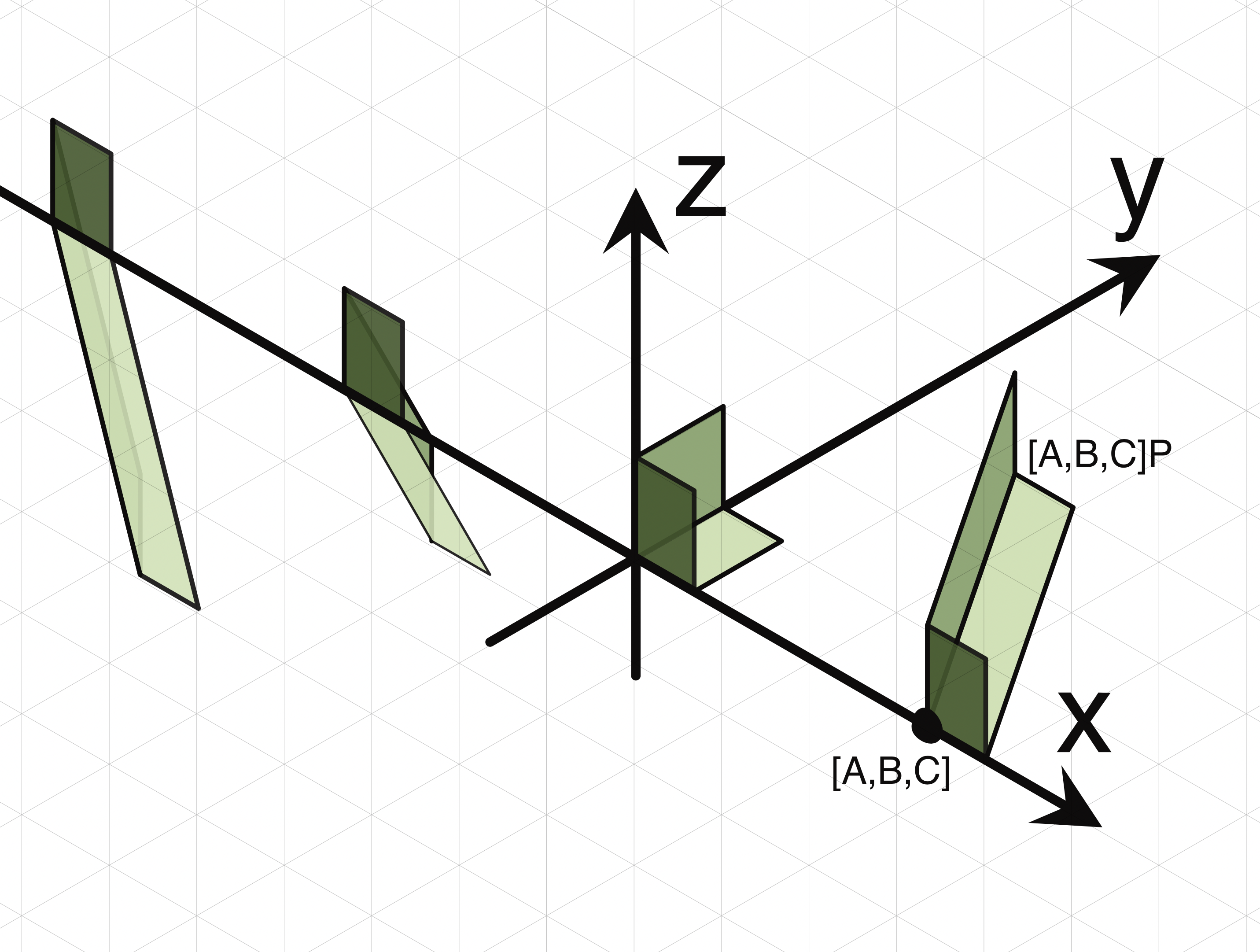}
    \caption{Sets of the form $\Psi_{\ord1}(g) = \{h \in H \ | \ g \ord1 h \}$, assume the form of sheared octants in the Heisenberg group.}
    \label{fig:cones}
\end{figure}
\end{example}

\section{A non-extendable partial invariant order on $\SL_3(\Z)$} \label{sec:SL3Z_nonextendable}

Denote by $\ord1$ the left-invariant partial order on $\Gamma = \SL_3(\Z)$ whose semigroup of positive elements is generated as a semigroup by the standard unipotent matrices  
$\Psi_{\ord1} = \langle a_1,a_2,\ldots, a_6\rangle$ where
\begin{equation}
\label{eqn:unip}
\begin{array}{ccc}
a_1=\begin{pmatrix}
	1 & 1 & 0\\
	0 & 1 & 0\\
	0 & 0 & 1
\end{pmatrix} & 
a_2=\begin{pmatrix}
1 & 0 & 1\\
0 & 1 & 0\\
0 & 0 & 1
\end{pmatrix} & 
a_3=\begin{pmatrix}
	1 & 0 & 0\\
	0 & 1 & 1\\
	0 & 0 & 1
\end{pmatrix}\\
a_4 = 
\begin{pmatrix}
	1 & 0 & 0\\
	1 & 1 & 0\\
	0 & 0 & 1
\end{pmatrix} & 
a_5=
\begin{pmatrix}
	1 & 0 & 0\\
	0 & 1 & 0\\
	1 & 0 & 1
\end{pmatrix} & a_6=
\begin{pmatrix}
	1 & 0 & 0\\
	0 & 1 & 0\\
	0 & 1 & 1
\end{pmatrix}
\end{array}
\end{equation}
	In this section we prove the following:
	\begin{thm} \label{thm:non_ext}
		There does not exist an invariant random order on $\Gamma:=\SL_3(\Z)$ with the property that any $e \ne A \in \SL_3(\Z)$ without negative entries is almost surely positive.
	\end{thm}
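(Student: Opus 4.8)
The plan is to derive a contradiction from the existence of such an IRO $\mu$ by exploiting the abundance of unipotent (in fact, congruence-type) relations inside $\SL_3(\Z)$. The key structural feature is that $\SL_3(\Z)$ contains many copies of $\Z^2$ — generated by pairs of elementary matrices $E_{ij}(1), E_{ik}(1)$ with a common first index, or more generally abelian subgroups of unipotent upper-triangular matrices — and on each such copy the hypothesis forces the restricted order to extend the positive-orthant partial order. Inside a copy of $\Z^2$ the only invariant total orders extending the coordinatewise partial order are the ``lexicographic-type'' orders associated to lines of non-negative (or rather, non-positive-reciprocal) slope, as recalled in \cref{example:Z_2_ext}. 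So $\mu$ restricted to such a $\Z^2$ is an IRO on $\Z^2$ supported on this restricted family. The first step is therefore to record precisely what IROs on $\Z^2$ look like when they must extend $\ord1$: they correspond to a random choice of a line through the origin (equivalently a ``direction''), possibly with a secondary tie-break, and the constraint $P\subset\Phi_\prec(\bar x)$ pins the direction to a closed arc of slopes.

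The heart of the argument should be a commutation/conjugation trap. Using the relation $[x^k,y^l]=z^{kl}$ (the analogue of \cref{eqn:hcomm}, now realized by elementary matrices $x=E_{12}(1)$, $y=E_{23}(1)$, $z=E_{13}(1)$ inside $\SL_3(\Z)$), together with conjugation by permutation-type or diagonal elements of $\SL_3(\Z)$ that cyclically permute the roles of the three elementary subgroups, I would show that the ``direction'' chosen by $\mu$ in one $\Z^2$ is forced, almost surely, to be incompatible with the direction forced in a conjugate $\Z^2$. Concretely: positivity of all non-negative matrices means $e\prec x$, $e\prec y$, $e\prec z$ almost surely, and more generally $g\prec gx, g\prec gy, g\prec gz$ for all $g$; applying this along the one-parameter families $x^n$, combined with $x^n y x^{-n} = y z^{?}$-type identities (really the conjugation action of $E_{12}$ on $E_{23}$ producing $E_{13}$), yields chains $e\prec z^n$ for all $n$ but also, after conjugating by an element that inverts $z$ while preserving the positive cone of the relevant subgroup, $e\prec z^{-n}$ — contradicting antisymmetry/totality. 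Making this precise requires finding an element $w\in\SL_3(\Z)$ (a signed permutation matrix, e.g. a rotation by $2\pi/3$ of the coordinate axes composed with a sign change, must be checked to lie in $\SL_3(\Z)$ and to send the positive octant semigroup $P$ into a position forcing the opposite inequality on $z$), and checking that conjugation by $w$ both preserves $\mu$ and transports the relevant half-plane constraint to its negative.

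I expect the main obstacle to be exactly this last point: producing the right finite set of elements of $\SL_3(\Z)$ whose conjugation action, when fed into the $\Z^2$-classification, overdetermines the random direction and forces a measure-zero event to have full measure. The positivity hypothesis is one-sided (only matrices with \emph{no negative entries} are positive), so conjugation does not in general preserve the hypothesis; the trick must instead use that $\mu$ is genuinely invariant under all of $\SL_3(\Z)$ and only the \emph{support} constraint (not the hypothesis) needs to be transported, via the explicit shape of $gP$ computed in each $\Z^2$. A secondary technical point is handling the tie-break/secondary order in the degenerate case where the random direction is rational, since then the $\Z^2$-order is not determined by a single line; here one descends to a rank-one subgroup and iterates, or argues that the rational directions form a $\mu$-null set by a separate averaging argument over the infinitely many available $\Z^2$'s. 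Rotational symmetry among the three elementary subgroups should make this bookkeeping manageable, and amenability is not available, so the contradiction genuinely must come from the group's internal geometry rather than from any fixed-point or Markov–Kakutani argument.
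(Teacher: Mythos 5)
Your proposal correctly identifies the engine of the paper's argument --- the elementary unipotent matrices, the Heisenberg relations $[a_{i-1},a_{i+1}]=a_i$ among them, and the idea that the order restricted to the various embedded copies of $\Z^2$ is ``overdetermined'' --- but two of its load-bearing steps fail. First, it is not true that the push-forward of $\mu$ under restriction to a copy of $\Z^2$ must be supported on the deterministic invariant (``line plus tie-break'') orders recalled in \cref{example:Z_2_ext}. That classification describes the \emph{fixed points} of $\Z^2 \curvearrowright \Ord(\Z^2)$, whereas a $\Z^2$-invariant \emph{measure} extending the orthant order can be, and typically is, carried entirely by non-invariant orders: for instance the law of $\prec_u$ with $u(m,n)=m+n+\tfrac{1}{2}\omega_{(m,n)}$, $\omega$ i.i.d.\ uniform, is such an IRO, and more generally every $\prec$ with $P \subseteq \Phi_{\prec}(\overline{0}) \subseteq P-v$ lies in $\Ext(\ord1)$ without being invariant. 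So there is no well-defined ``random direction'' for the conjugation trap to act on. Second, an IRO is invariant only under left multiplication, not under conjugation, so ``conjugation by $w$ preserves $\mu$'' is unjustified; and even granting it, the intended terminal contradiction ($e \prec z^{n}$ together with $e \prec z^{-n}$ for all $n$) is not a contradiction for a total order that is not left-invariant --- one would need $z^{n}\prec e$, which does not follow.

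The deeper structural point is that no pointwise contradiction of the kind you aim for can exist: $\Ext(\ord1)$ is a non-empty compact $\Gamma$-invariant set (every partial order extends to some total order), so the contradiction must be genuinely measure-theoretic. The paper achieves this by (a) replacing ``the direction of the restricted order'' with one-sided notions $\sml{\pm}(a,b)$ and $\wml{\pm}(a,b)$ that make sense for arbitrary, non-invariant extensions, and proving via the Heisenberg relations that $\Ext(\ord1)=\sml{+}\cup\,\sml{-}$ (\cref{lem:Ord_0_partition}); and then (b) showing each of $\sml{+},\sml{-}$ is a countable union of \emph{wandering} sets (\cref{lem:wander}, \cref{lem:non_recurr}, \cref{lem:W_weakly_dissipative}): the translates $a_{i-1}^{-n_i}B(q,i)$ have empty sixfold intersection because a point in that intersection would satisfy the cyclic chain $a_1^{q}\prec a_2^{q}\prec\cdots\prec a_6^{q}\prec a_1^{q}$, and wandering sets are null for every invariant measure. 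Your proposal contains nothing playing the role of step (b), and step (a) as you formulate it --- via the classification of invariant orders on $\Z^2$ --- is false. Repairing it essentially forces you to reinvent the $\sml{}/\wml{}$ dichotomy and the non-recurrence argument.
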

This shows that without the amenability assumption, it is not in general possible to extend a partial invariant order to a total invariant order, providing a negative answer to a question posed in \cite{AMR:21}. The proof is an adaptation of Dave Morris' proof that any finite index subgroup of $\SL_n(\Z)$ with $n \ge 3$ is non-orderable \cite[Proposition $3.3$]{Wit:94} (see also the monograph \cite{deroin2016groups}, or the survey \cite{morris:survay}). 

\begin{remark} A slightly larger semigroup is the semigroup consisiting of all matrices $e \ne A$ which are entrywise positive. Geometrically  the partial order defined by this larger semigroup is given by $g < h$ if and only if $g \ne h$ and $hP \subset gP$ where $P \subset \R^3$ is the positive octant. Of course our theorem above implies that this order too, cannot be extended to to an IRO on $\Gamma$. We thank Andrei Alpeev for drawing our attention to a mistake in an early version of this paper, where we didn't clearly distinguish between these two semigrops. 
\end{remark}

A straightforward verification shows that $[a_{i},a_{i+1}]=e$ and $[a_{i-1},a_{i+1}]=a_i$, with all subscripts read modulo $6$. Our goal is to show that $\Ext(\ord1)$ admits no $\Gamma$-invariant Borel probability measure. 

As in \cite{Wit:94} we will be interested in the question when an element $a \in \Gamma$ is much larger than another element $b$. Here are four natural definitions capturing this notion in some special cases:  

\begin{definition}
Let $a,b \in \Gamma$ such that $e \ord1 a,b$, and $\langle a,b \rangle = \Z^2$.
 \begin{eqnarray*}
\wml{+}(a,b) & := & \{ \prec \in \Ext(\ord1) \ | \ \forall M>0, \exists N>0 {\text{ such that }} a^{-k}b^{Mk} \prec e, \ \forall k\ge N\}, \\
\wml{-}(a,b) & := & \{ \prec \in \Ext(\ord1) \ | \ \forall M>0, \exists N>0 {\text{ such that }} e \prec b^{-Mk}a^{k}, \ \forall k\ge N\} \\
\sml{+}(a,b) & := &  \left\{ \prec \in \Ext(\ord1) \ | \ \exists q > 0 {\text{ such that }}  a^{-q}b^{n} \prec e  \; \forall n\in \N \right\} \\
\sml{-}(a,b) & := &  \left\{ \prec \in \Ext(\ord1) \ | \ \exists q>0 {\text{ such that }} e \prec b^{-n}a^{q}  \; \forall n\in \N \right\}, 
\end{eqnarray*}
The acronyms wml and sml stand for weakly and strongly much larger, respectively.
\end{definition}
These definitions are tailored for our current proof. Similar definitions make sense in much more general settings: $\Gamma$ could be a general countable group, $\ord1$ any $\Gamma$-invariant partial order. Note that this definition looks only at the dynamic past $\Phi_{\prec}(e)$ at the identity.

That $a$ be much larger than $b$ should entail inside the $a,b$-plane that the line $U$ separating the positive and negative elements has to come very close to the $b$-axis. The stronger notion above, requires $U$ to be at a bounded distance from the $b$-axis. The weak notion requires $U$ to be eventually closer to the $b$-axis than any linear line with a finite slope. The $\pm$ superscript represents whether this closeness is measured along the positive and negative directions of the $b$-axis. It is quite possible for an order to exhibit completely different behavior in these two regions. Indeed if $P$ is the positive qudrant without zero, then every order $\prec \in \Ext(\ord1)$ with the property that $P \subset \Psi_{\prec}(\overline{0}) \subset P-v$ for some positive $v \in \Z^2$ will in fact satisfy $\prec \in \sml{+}(a,b) \cap \sml{+}(b,a)$. So that this latter set is far from being empty. For invariant orders all four notions above coincide and contain exactly one order in $\Ext(\ord1)$ - the lexicographic order. 

\begin{lemma}\label{lem:A_B_C_rel_2}
Let $a,b \in \Gamma$ be such that $e \ord1 a,b$ and $\langle a,b \rangle = \Z^2$. Then 
	\begin{enumerate}
	\item \label{lem:a1} $\sml{\pm}(a,b) \subseteq \wml{\pm}(a,b)$. 
	\item \label{lem:a2} $\sml{-}(a,b) \subseteq \wml{+}(b,a)^c$.
\end{enumerate}
\end{lemma}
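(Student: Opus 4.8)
The plan is to prove each of the two inclusions directly from the definitions, unwinding what it means for an order $\prec \in \Ext(\ord1)$ to lie in the various much-larger sets. Throughout I would work inside the plane $\langle a,b\rangle \cong \Z^2$, writing a general element as $b^{-m}a^{k}$ (for $m,k \in \Z$), and I would translate each defining condition into a statement about which of these elements are $\prec$-positive (i.e.\ $e \prec b^{-m}a^{k}$) versus $\prec$-negative.

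For part \eqref{lem:a1}: Suppose $\prec \in \sml{+}(a,b)$, so there is a fixed $q>0$ with $a^{-q}b^{n} \prec e$ for every $n \in \N$. I must show $\prec \in \wml{+}(a,b)$, i.e.\ for every $M>0$ there is $N$ with $a^{-k}b^{Mk} \prec e$ for all $k \ge N$. The idea is that $a^{-k}b^{Mk} = a^{-(k-q)}\cdot a^{-q}b^{Mk}$, and for $k$ large enough $k - q \ge 0$; combining left-invariance of $\prec$ with the fact that $a \succ e$ (so multiplying on the left by $a^{-(k-q)}$ pushes things down), together with transitivity and the hypothesis $a^{-q}b^{n}\prec e$ applied to $n = Mk$, should give $a^{-k}b^{Mk}\prec e$. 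More carefully: from $a^{-q}b^{Mk}\prec e$ and $e \prec a^{k-q}$ (since $a$ is positive and $k-q \ge 0$, here I need $k - q \ge 1$, i.e.\ $N = q+1$ works when $Mk > 0$), left-multiply the second by $a^{-(k-q)}$ to get $a^{-(k-q)} \prec e$, then chain with $a^{-(k-q)}\cdot(a^{-q}b^{Mk}) \prec a^{-(k-q)}\cdot e = a^{-(k-q)} \prec e$ — so I get $a^{-k}b^{Mk}\prec e$ once one is slightly careful that $a^{-q}b^{Mk}\prec e$ holds (it does, with $n = Mk \ge 1$). Hence $N = q+1$ works, independent of $M$. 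The $\sml{-} \subseteq \wml{-}$ case is symmetric, replacing $b^n$-tails by $a^q$ and reversing the roles, reading off $e \prec b^{-Mk}a^k$ from $e \prec b^{-n}a^q$ with $n$ large.

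For part \eqref{lem:a2}: I want $\sml{-}(a,b) \subseteq \wml{+}(b,a)^{c}$, i.e.\ if $\prec\in\sml{-}(a,b)$ then $\prec \notin \wml{+}(b,a)$. By definition $\prec\in\sml{-}(a,b)$ means there is $q>0$ with $e\prec b^{-n}a^{q}$ for all $n\in\N$. And $\prec \in \wml{+}(b,a)$ would mean: for every $M>0$ there is $N$ with $b^{-k}a^{Mk}\prec e$ for all $k\ge N$. To contradict the latter it suffices to find one value of $M$ for which the condition fails, i.e.\ for which $b^{-k}a^{Mk}\succ e$ for infinitely many $k$ (in fact for all large $k$). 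Take $M = q$: then $b^{-k}a^{qk}$ should be comparable to the positive elements $b^{-n}a^{q}$ supplied by the hypothesis. Indeed, I would try to show $e \prec b^{-k}a^{qk}$ for all $k \ge 1$ by a telescoping/transitivity argument: $b^{-k}a^{qk} = \prod_{j} (\text{left translates of } b^{-1}a^{q})$, more precisely writing $b^{-k}a^{qk}$ as a product $b^{-1}\cdot(b^{-1}a^{q})\cdots$ — one has to be careful about the non-commutation bookkeeping, but here $a,b$ commute since $\langle a,b\rangle=\Z^2$, so $b^{-k}a^{qk} = (b^{-1}a^{q})^{k}$, and a positive element raised to a positive power is positive (a product of positive elements is positive, by left-invariance and transitivity). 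Thus $b^{-k}a^{qk}\succ e$ for all $k\ge1$, which directly contradicts $b^{-k}a^{Mk}\prec e$ for large $k$ with $M=q$. Hence $\prec\notin\wml{+}(b,a)$.

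The main obstacle I anticipate is purely notational/bookkeeping rather than conceptual: getting the signs and the order of multiplication right when passing from the "slope" picture to the algebra, and making sure the threshold $N$ in the $\wml{}$ conditions is chosen uniformly in $M$ where required (for the $\sml{}\subseteq\wml{}$ direction $N$ does not depend on $M$, which is exactly why the strong notion implies the weak one). I would also double-check the edge cases where $q$ or $Mk$ is small (e.g.\ ensuring $k-q\ge 1$), but these only affect the choice of $N$. Since $a$ and $b$ commute in all cases considered, no commutator corrections appear, which keeps the computations clean.
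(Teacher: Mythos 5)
There is a genuine gap, and it is precisely the central subtlety of this section: in both parts you treat $\prec$ as if it were a left-invariant order. An element $\prec \in \Ext(\ord1)$ is an \emph{arbitrary} total order extending $\ord1$; only the partial order $\ord1$ is left-invariant. Consequently you may not left-multiply a $\prec$-relation by a group element, and the set $\{g \in \Gamma : e \prec g\}$ is not a semigroup. The only monotonicity available is: for every $g\in\Gamma$ and every $\ord1$-positive $s$ (e.g.\ $s=a^j$ or $s=b^j$ with $j\ge 1$), one has $g \ord1 gs$ by left-invariance of $\ord1$, hence $g \prec gs$ --- that is, right-multiplication by elements of the positive cone of $\ord1$. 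Both of your key steps break exactly at this point. In part (1) you pass from $a^{-q}b^{Mk}\prec e$ to $a^{-(k-q)}\cdot(a^{-q}b^{Mk}) \prec a^{-(k-q)}$ by invoking ``left-invariance of $\prec$''; this inference is unjustified. In part (2) you argue that $(b^{-1}a^{q})^{k} \succ e$ because ``a product of positive elements is positive, by left-invariance and transitivity''; for a non-invariant total order this principle is false in general.

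Both conclusions are nevertheless reachable once every manipulation is routed through $\ord1$. For (1) the paper writes, for $k>q$, $a^{-k}b^{Mk} \prec a^{-k}b^{Mk}a^{k-q} = a^{-q}b^{Mk} \prec e$: the first inequality is right-multiplication by the $\ord1$-positive element $a^{k-q}$, the middle equality uses that $a$ and $b$ commute, and the last is your hypothesis with $n=Mk$. For (2), your intermediate claim $e \prec b^{-k}a^{qk}$ is in fact true, but for a different reason: $e \prec b^{-k}a^{q} \prec \bigl(b^{-k}a^{q}\bigr)a^{q(k-1)} = b^{-k}a^{qk}$, again by right-multiplication by an $\ord1$-positive power of $a$; the paper instead derives the contradiction directly by taking $M=1$ in the definition of $\wml{+}(b,a)$ and chaining $e \prec a^{q}b^{-n} \prec a^{q}b^{-n}a^{n-q}=a^{n}b^{-n}=b^{-n}a^{n} \prec e$. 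So the skeleton of your argument is sound, but every appeal to ``left-invariance of $\prec$'' or to the positive cone of $\prec$ being a semigroup must be replaced by right-multiplication by an element of the positive semigroup of $\ord1$.
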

\begin{proof}
Let $a,b \in \Gamma$ be as given. 
\begin{enumerate}
        \item Suppose $\prec \in \sml{+}(a,b)$ so that $a^{-q}b^n \prec e$ for some $q$ and all $n$. Then for any $0 < M$ and $k > q$, we have $a^{-k}b^{Mk} \prec a^{-k}b^{Mk}a^{k-q} = a^{-q}b^{Mk} \prec e$, as required. The other statement follows similarly.
        \item Suppose, by way of contradiction that $\prec \in \sml{-}(a,b) \cap \wml{+}(b,a)$. In particular $\prec \in \sml{-}(a,b)$ yields $q>0$ such that $e \prec a^qb^{-n}$ for all $n$, while $\prec \in \wml{+}(b,a)$ ensures that $b^{-n}a^n \prec e$ for all large enough $n$. By our hypotheses that $a$ and $b$ commute and $a$ is positive, we obtain $e \prec a^qb^{-n} \prec a^nb^{-n} \prec e$, a contradiction to transitivity and non-reflexivity.
\end{enumerate}
\end{proof}

The following lemma is a slight elaboration of \cite[Lemma $3.2$]{Wit:94}. 
\begin{lemma}\label{lem:A_B_C_rel_1} 
With $a_i \in \SL_3(\Z)$ the basic unipotent matrices defined in \cref{eqn:unip}, and all indexes taken modulo $6$ we have:  
	\begin{enumerate}
		\item \label{lem:b1} $\wml{+}(a_i,a_{i+1})^c \subseteq \sml{-}(a_{i+2},a_{i+1})$.
		\item \label{lem:b2} $\wml{+}(a_i,a_{i+1}) \subseteq \sml{+}(a_{i-1},a_i)$.
	\end{enumerate}
\end{lemma}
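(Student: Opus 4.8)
I would prove both inclusions by one device. In each case one rewrites the hypothesis on $\prec$ --- membership, resp.\ non-membership, in a set $\wml{+}(\cdot,\cdot)$ --- as a family of comparisons of the form $u\prec e$ or $e\prec u$ for explicit words $u$ in the generators $a_1,\dots,a_6$, and then one inserts the word occurring in the target $\sml{\pm}$-condition on the correct side of such a comparison. The only input used for an insertion is that $\prec$ extends $\ord1$: if $v^{-1}w$ equals a non-trivial product of non-negative powers of the generators, then $v^{-1}w\in\Psi_{\ord1}$, hence $v\prec w$. All the rewriting is carried by the relations $[a_i,a_{i+1}]=e$ and $[a_i,a_{i+2}]=a_{i+1}$: these say that in $\langle a_i,a_{i+1},a_{i+2}\rangle$ the element $a_{i+1}$ is central and $a_i a_{i+2}a_i^{-1}=a_{i+1}a_{i+2}$, and a short induction then gives
\[
a_i^{\,r}\,a_{i+2}^{\,s}=a_{i+1}^{\,rs}\,a_{i+2}^{\,s}\,a_i^{\,r},\qquad
a_{i-1}^{\,r}\,a_{i+1}^{\,s}=a_i^{\,rs}\,a_{i+1}^{\,s}\,a_{i-1}^{\,r}\qquad(r\in\Z,\ s\ge 0).
\]
By the cyclic symmetry of the defining relations it suffices to treat one value of $i$; I will also use freely that $\{a_i,a_{i+1}\},\{a_{i+1},a_{i+2}\},\{a_{i-1},a_i\}$ are commuting pairs of $\ord1$-positive elements spanning copies of $\Z^2$, so all the sets in the statement are defined.

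For part (1), suppose $\prec\notin\wml{+}(a_i,a_{i+1})$: there is a fixed $M>0$ with $a_i^{-k}a_{i+1}^{Mk}\not\prec e$ for infinitely many $k$, and since $a_i^{-k}a_{i+1}^{Mk}\ne e$ for $k\ge1$ and $\prec$ is total, this means $e\prec a_i^{-k}a_{i+1}^{Mk}$ for infinitely many $k$. I would show $\prec\in\sml{-}(a_{i+2},a_{i+1})$ with witness $q:=M+1$. Fixing $n\in\N$, the relation $[a_i,a_{i+1}]=e$ together with the first displayed identity (with $r=k$, $s=M+1$) gives, after a short computation,
\[
\bigl(a_i^{-k}a_{i+1}^{Mk}\bigr)^{-1}a_{i+1}^{-n}a_{i+2}^{\,M+1}=a_{i+1}^{\,k-n}\,a_{i+2}^{\,M+1}\,a_i^{\,k},
\]
which for $k\ge n$ is a non-trivial product of non-negative powers of the generators, hence lies in $\Psi_{\ord1}$; thus $a_i^{-k}a_{i+1}^{Mk}\prec a_{i+1}^{-n}a_{i+2}^{\,M+1}$ whenever $k\ge n$. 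Choosing $k\ge n$ among the infinitely many indices with $e\prec a_i^{-k}a_{i+1}^{Mk}$ and using transitivity yields $e\prec a_{i+1}^{-n}a_{i+2}^{\,M+1}$, as needed.

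For part (2), suppose $\prec\in\wml{+}(a_i,a_{i+1})$; taking $M=1$ in the definition gives $N$ with $a_i^{-k}a_{i+1}^{k}\prec e$ for all $k\ge N$. I would show $\prec\in\sml{+}(a_{i-1},a_i)$ with witness $q:=2$. For $n=0$ this is immediate, since $a_{i-1}^{2}\in\Psi_{\ord1}$ forces $a_{i-1}^{-2}\prec e$. For $n\ge1$, the relation $[a_{i-1},a_i]=e$ and the second displayed identity (with $r=2$, $s=k$) give, again after a short computation,
\[
\bigl(a_{i-1}^{-2}a_i^{n}\bigr)^{-1}a_i^{-k}a_{i+1}^{k}=a_i^{\,k-n}\,a_{i+1}^{\,k}\,a_{i-1}^{\,2},
\]
which lies in $\Psi_{\ord1}$ whenever $k\ge n$, so $a_{i-1}^{-2}a_i^{n}\prec a_i^{-k}a_{i+1}^{k}$ for all $k\ge n$. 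Taking $k\ge\max(N,n)$ and combining with $a_i^{-k}a_{i+1}^{k}\prec e$ gives $a_{i-1}^{-2}a_i^{n}\prec e$, completing the verification.

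The step I expect to be delicate is the choice of the extra exponent --- $M+1$ on $a_{i+2}$ in part (1), and $2$ on $a_{i-1}$ in part (2): it must be picked so that, after commuting the central generator to the front, its exponent collapses exactly to $k-n$, which is $\ge 0$ precisely when $k\ge n$. This is what forces the relevant word into $\Psi_{\ord1}$ and lets a single application of transitivity close the chain; everything else is the elementary commutator arithmetic recorded above, with the cyclic symmetry reducing all indices $i$ to one.
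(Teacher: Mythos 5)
Your proof is correct and follows essentially the same route as the paper's: negate/instantiate the $\wml{+}$ condition, right-multiply by an explicit $\ord1$-positive word, and collapse it via the Heisenberg commutator identity $[x^k,y^l]=z^{kl}$; the only differences are cosmetic (witness $q=M+1$ in place of the paper's $q=2M$, and a single right-multiplication where the paper uses two successive ones).
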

Both statements  actually reflect some information about the geometry of the discrete Heisenberg group $H = \langle a_i,a_{i+1},a_{i+2}\rangle$. Before proceeding to the formal proof let us explain this geometric point of view:

As in \cref{example_heisenberg} we denote Heisenberg matrices by triplets of integers
$$[a,b,c] \cong \begin{pmatrix} 1 & a & c \\ 0 & 1 & b \\ 0 & 0 & 1 \end{pmatrix}$$ Note that when we restrict ourselves to the $\langle x,z \rangle$ and $\langle z,y \rangle$ planes the group operations are well represented by the arithmetic in $\R^3$. 
\begin{figure}[ht]
    \centering
    \includegraphics[width=8cm]{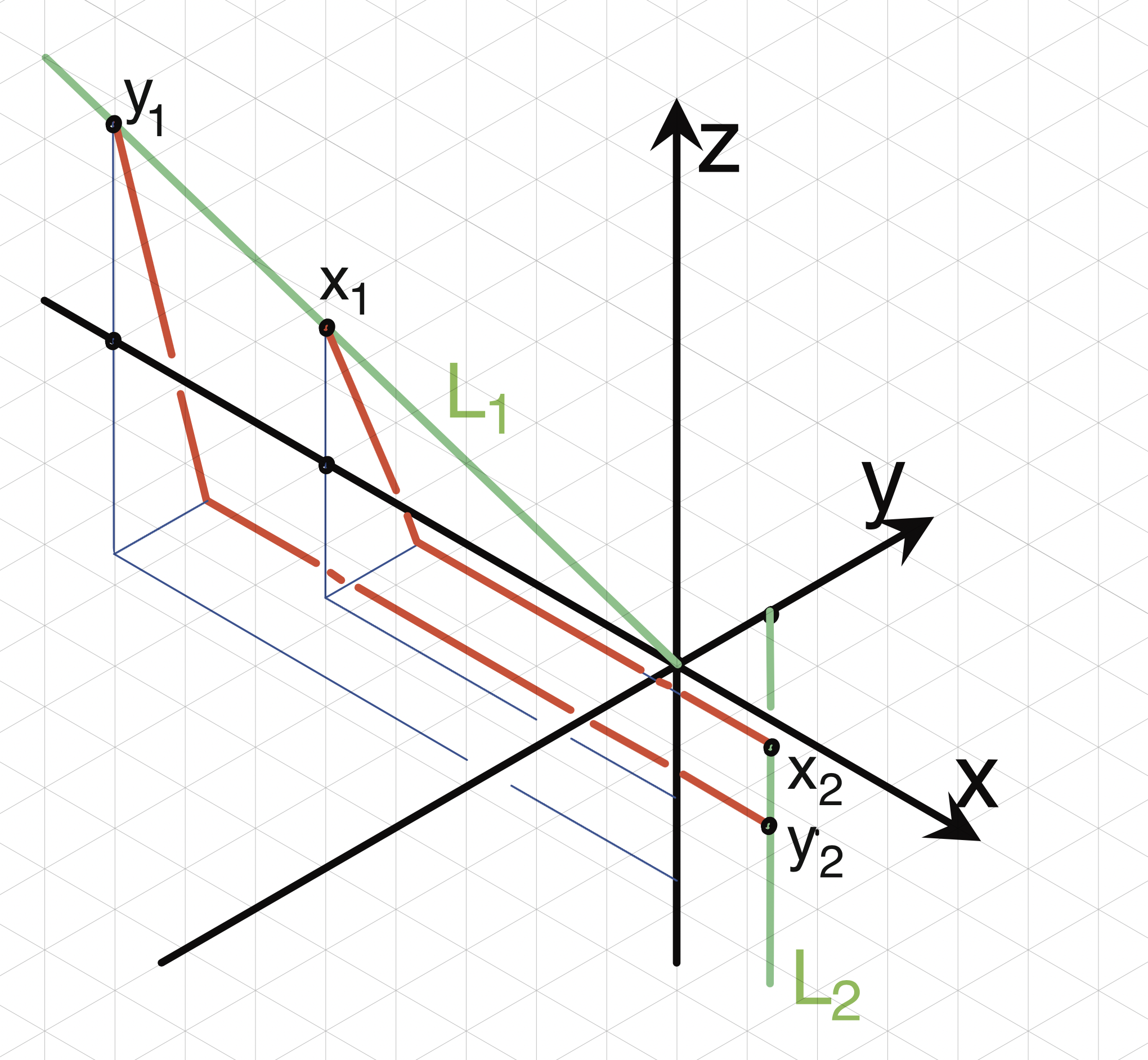}
    \caption{An illustration of $\wml{+}(x,z)^c \subseteq \sml{-}(y,z)$} 
    \label{fig:hies}
\end{figure}
The proof  of the first claim is depicted in \cref{fig:hies}, where $a_i,a_{i+1},a_{i+2}$ are represented by the $x,z,y$ coordinates respectively. Suppose $\prec \in \wml{+}( a_i,a_{i+1})^c$. Then there exists $M \in \N$ such that $e \prec a_i^{-k}a_{i+1}^{Mk}$ for infinitely many  values of $k$. Graphically this yields the green line $L_1$, of slope $-M$ in the $x,z$ plane, with the property that infinitely many integral points falling on it are $\prec$-positive. Two such points $x_1,y_1$ are depicted in the picture. As in \cref{fig:cones}, together with each such positive point comes a sheared cone of points that are bound to be even bigger according to $\ord1$. Following the red lines, along the boundary of these sheared cones, we obtain another line $L_2$ parallel to the negative direction of the $z$ axis in the $y,z$-plane, containing infinitely many positive points. Since we are extending $\ord1$, all points directly above each positive point on $L_2$ is also positive, so that $L_2$ consists entirely of positive points, which is exactly what we need. Keeping the geometric interpretation in mind, we proceed to prove the statements algebraically:
\begin{proof}
	    As above suppose $\prec \in \wml{+}( a_i,a_{i+1})^c$. Then there exists $M \in \N$ such that $e \prec a_i^{-k}a_{i+1}^{Mk}$ for infinitely many  values of $k$.  Because $\prec \in \Ext(\ord1)$ and $e\ord1 a_{i+2},a_{i}$ for any $q \in \N$ we have $a_i^{-k}a_{i+1}^{Mk} \prec a_i^{-k}a_{i+1}^{Mk} a_{i+2}^q a_i^{k}$. Using the commutation relations in the Heisenberg group (cf. (\cref{eqn:hcomm}) at the end of \cref{sec:general}) we obtain
	    $a_i^{-k}a_{i+1}^{Mk} a_{i+2}^q a_i^{k} = a_{i+1}^{Mk-qk}a_{i+2}^q$. 
	    Choose $q=2M$ with $M$ as above.
	    It follows that there are infinitely many $k$'s such that
	   $e \prec a_{i+1}^{-Mk}a_{i+2}^q$.
	    By multiplying by positive powers of $a_{i+1}$ we conclude that  $e \prec a_{i+1}^{-n}a_{i+2}^q$ for all $n$, so $\prec \in \sml{-}(a_{i+2},a_{i+1})$ as required. This completes the proof of (\cref{lem:b1}). 
	    
	    Suppose now that $\prec \in \wml{+}(a_i,a_{i+1})$. Choosing $M=1$ in the definition of that set, we can find $N \in \N$ such that 
	    $a_i^{-n}a_{i+1}^n \prec e$ for all $n >N$. Multiplying from the right by negative powers of  $a_{i-1}$ and $a_{i+1}$, we have
	    $a_{i}^{-n}a_{i+1}^na_{i-1}^{-2}a_{i+1}^{-n} \prec e.$ Using (\cref{eqn:hcomm}) again, we have $a_{i-1}^{-2}a_{i+1}^{-n}=a_{i}^{2n}a_{i+1}^{-n}a_{i-1}^{-2}$.
	    so that 
	    \[a_{i-1}^{-2}a_i^{n}=
	    a_i^{-n}a_{i+1}^na_{i}^{2n}a_{i+1}^{-n}a_{i-1}^{-2}=
	    a_i^{-n}a_{i+1}^na_{i-1}^{-2}a_{i+1}^{-n} = \prec e\] 
	    Multiplying by negative powers of $a_i$ if needed, we conclude that  $a_{i-1}^{-2}a_i^n \prec e$ for all $n$. So $\prec \in \sml{+}(a_{i-1},a_i)$ which completes the proof of (\cref{lem:b2}).
\end{proof}

Let $\sml{-} := \bigcap_{i=1}^6 \sml{-}(a_i,a_{i-1})$ and $\sml{+} := \bigcap_{i=1}^6 \sml{+}(a_i,a_{i+1})$, where all the indices are considered modulo $6$. 
\begin{lemma}\label{lem:Ord_0_partition}
	\[\Ext(\ord1) = \sml{+} \cup \sml{-}.\]
\end{lemma}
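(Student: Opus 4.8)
The plan is to show that every $\prec \in \Ext(\ord1)$ lies in $\sml{+}$ or in $\sml{-}$ by combining the two preceding lemmas cyclically around the hexagon of unipotents $a_1,\dots,a_6$. The key observation is that \cref{lem:A_B_C_rel_1}\eqref{lem:b1} lets us propagate information about $\wml{+}(a_i,a_{i+1})^c$ to a statement about $\sml{-}(a_{i+2},a_{i+1})$, and \cref{lem:A_B_C_rel_1}\eqref{lem:b2} sends $\wml{+}(a_i,a_{i+1})$ into $\sml{+}(a_{i-1},a_i)$; together with \cref{lem:A_B_C_rel_2}\eqref{lem:a1} (i.e.\ $\sml{\pm}\subseteq\wml{\pm}$) these give a way to chain implications along the indices modulo $6$.

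First I would fix $\prec\in\Ext(\ord1)$ and perform a case analysis on whether $\prec$ lies in $\wml{+}(a_i,a_{i+1})$ for the various $i$. Concretely, start with the dichotomy $\prec\in\wml{+}(a_1,a_2)$ or $\prec\in\wml{+}(a_1,a_2)^c$. In the first case, \eqref{lem:b2} gives $\prec\in\sml{+}(a_0,a_1)=\sml{+}(a_6,a_1)$; by \eqref{lem:a1} this forces $\prec\in\wml{+}(a_6,a_1)$, and one continues around: $\wml{+}(a_6,a_1)\Rightarrow\sml{+}(a_5,a_6)\Rightarrow\wml{+}(a_5,a_6)\Rightarrow\cdots$, eventually cycling through all six indices and yielding $\prec\in\bigcap_{i}\sml{+}(a_i,a_{i+1})=\sml{+}$. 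In the second case, \eqref{lem:b1} gives $\prec\in\sml{-}(a_3,a_2)$, hence (by \eqref{lem:a1}) $\prec\in\wml{-}(a_3,a_2)$; now I would use \cref{lem:A_B_C_rel_2}\eqref{lem:a2} in the form $\sml{-}(a,b)\subseteq\wml{+}(b,a)^c$, applied with $(a,b)=(a_3,a_2)$, to get $\prec\in\wml{+}(a_2,a_3)^c$, which re-enters the second case with the index shifted by one. Iterating, $\wml{+}(a_{i},a_{i+1})^c$ for all $i$, and then \eqref{lem:b1} at each $i$ produces $\prec\in\sml{-}(a_{i+2},a_{i+1})$ for all $i$, i.e.\ $\prec\in\sml{-}$.

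The step I expect to be the main obstacle is bookkeeping the index shifts correctly: \eqref{lem:b1} moves the relevant pair from $(a_i,a_{i+1})$ to $(a_{i+2},a_{i+1})$ — a shift that is not simply "rotate by one" — while \eqref{lem:a2} flips the roles of the two generators, and \eqref{lem:b2} shifts by $-1$. One must check that the composite of these maps on the set of (ordered) adjacent pairs around the hexagon is a single $6$-cycle (or a product of cycles covering everything), so that a finite iteration really does sweep through all six relations $\sml{\pm}(a_i,a_{i\pm1})$; verifying that $[a_{i-1},a_{i+1}]=a_i$ and $[a_i,a_{i+1}]=e$ are exactly the Heisenberg relations needed for \eqref{lem:b1}–\eqref{lem:b2} to apply at \emph{every} $i$ (not just one) is the routine-but-delicate part. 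Once the orbit structure is confirmed, the two cases give precisely $\prec\in\sml{+}$ and $\prec\in\sml{-}$ respectively, and since $\Ext(\ord1)=\bigl(\Ext(\ord1)\cap\wml{+}(a_1,a_2)\bigr)\cup\bigl(\Ext(\ord1)\cap\wml{+}(a_1,a_2)^c\bigr)$ exhausts all of $\Ext(\ord1)$, we conclude $\Ext(\ord1)=\sml{+}\cup\sml{-}$.
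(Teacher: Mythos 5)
Your proposal is correct and follows essentially the same route as the paper: a two-case analysis on membership in $\wml{+}(a_1,a_2)$, chaining \cref{lem:A_B_C_rel_1} and \cref{lem:A_B_C_rel_2} cyclically around the six unipotents to land in $\sml{+}$ or $\sml{-}$ respectively. The only (harmless) detour is your intermediate invocation of \cref{lem:A_B_C_rel_2}(\ref{lem:a1}) to pass to $\wml{-}(a_3,a_2)$ in the second case, which is unnecessary since \cref{lem:A_B_C_rel_2}(\ref{lem:a2}) applies directly to $\sml{-}(a_3,a_2)$.
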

\begin{proof}
	Take any $\prec \in \Ext(\ord1)$.
	
	Case 1: suppose $\prec \in \wml{+}(a_1,a_2)^c$.  By \cref{lem:A_B_C_rel_1} (\cref{lem:b1}), $\prec \in \sml{-}(a_3,a_2)$.  Then by \cref{lem:A_B_C_rel_2} (\cref{lem:a2}), $\prec \in \wml{+}(a_2,a_3)^c$.  Repeat this argument five more times to obtain $\prec \in \cap_{i=1}^6 \sml{-}(a_i,a_{i-1})$.
	
	Case 2: suppose $\prec \in \wml{+}(a_1,a_2)$. By \cref{lem:A_B_C_rel_1} (\cref{lem:b2}), $\prec \in \sml{+}(a_6,a_1)$.  Then by \cref{lem:A_B_C_rel_2} (\cref{lem:a1}), $\prec \in \wml{+}(a_6,a_1)$. Repeat this argument five more times to obtain $\prec \in \cap_{i=1}^6 \sml{+}(a_i,a_{i+1})$.
\end{proof}
 The above lemma is an analogue of Dave Witte Morris' proof of \cite[Proposition $3.3$]{Wit:94}. In the deterministic setting, it is rather immediate that neither $\sml{+}$ nor  $\sml{-}$ contain an invariant order. In our random setting more work must be done to show neither support an invariant probability measure. It is interesting to note that all our results so far, and in particular \cref{lem:A_B_C_rel_1} and \cref{lem:Ord_0_partition} are deterministic in the sense that no probability is involved. 
 
 To show that $\sml{+},\sml{-}$cannot support an invariant probability measure, we will eventually decompose them into  \emph{wandering sets}: Suppose a countable group $\Gamma$ acts on a Borel space $X$ by Borel automorphisms.
We say $A \subseteq X$ is \emph{wandering} if there exists $g \in \G$ such that $(g^nA)_{n\in \Z}$ are all pairwise disjoint.  Let $\sW$ be the collection of all countable unions of wandering sets. We say that $A$ is \emph{non-recurrent} with respect to $g \in \Gamma$ if for all $x \in X$, $\sum_{n =1}^\infty 1_A(g^n(x)) < +\infty$.  

The following simple lemma collects some basic facts about wandering and non-recurrent sets. The arguments are all elementary set-theoretic, no assumptions at all are made about the algebraic structure of  the countable group $\Gamma$.
\begin{lemma}\label{lem:wander}
The following are true:
\begin{enumerate}
    \item $\sW$ is a $\G$-invariant $\sigma$-ideal. That is, it is closed under countable unions and taking subsets.
    \item Any $A \in \sW$ has zero measure with respect to any $\G$-invariant probability measure.
    \item If $A \subset X$ is non-recurrent with respect to any $g \in \Gamma$ then $A \in \sW$.
    \item If $A \subset X$ is such that there exists $g \in \G$ and $N \in \N$ such that for all $n \geq N$, $g^nA \cap A = \emptyset$, then $A \in \sW$.
    \item If $A \subset X$ is such that there exists $g\in\G$ and $N \in \N$ such that for all $n \geq N$, $g^nA \cap A \in \sW$, then $A \in \sW$.
    
\end{enumerate}
\end{lemma}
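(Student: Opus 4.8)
The plan is to open with a convenient reformulation of the wandering property, which makes four of the five items essentially bookkeeping: for $h \in \G$ and $B \subseteq X$, the translates $(h^nB)_{n\in\Z}$ are pairwise disjoint if and only if $h^nB\cap B=\emptyset$ for every $n\ge 1$. (One direction is the case $n,0$; for the other, applying the bijection $h^{-i}$ to $h^iB\cap h^jB$ with $i<j$ reduces to $B\cap h^{j-i}B=\emptyset$.) With this in hand I expect (1), (2), (4) and (5) to be routine, and (3) to be the only step carrying real content.

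For (1): a subset of a set that is wandering with respect to $g$ is again wandering with respect to the same $g$, since disjointness passes to subsets, so $\sW$ is closed under subsets; it is closed under countable unions because a countable union of countable unions is a countable union; and it is $\G$-invariant because if $A_0$ is wandering with witness $g$ then $hA_0$ is wandering with witness $hgh^{-1}$, as $(hgh^{-1})^n(hA_0)=h(g^nA_0)$ and $h$ is injective. For (2): if $A_0$ is wandering via $g$ and $\mu$ is a $\G$-invariant probability measure, then $\sum_{n\in\Z}\mu(A_0)=\sum_{n\in\Z}\mu(g^nA_0)=\mu\!\left(\bigsqcup_{n\in\Z}g^nA_0\right)\le 1$, forcing $\mu(A_0)=0$; countable (sub)additivity then yields $\mu(A)=0$ for every $A\in\sW$.

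The combinatorial heart, and the step I expect to cost the most thought, is (3). Suppose $A$ is non-recurrent with respect to $g$, so every forward $g$-orbit meets $A$ only finitely often. I would stratify $A$ by ``last return time'': let $A_\infty$ be the set of $x\in A$ whose forward orbit never re-enters $A$, and for $m\ge 1$ let $A_m=\{x\in A : g^mx\in A \text{ and } g^nx\notin A \text{ for all }n>m\}$. Non-recurrence forces $A=A_\infty\sqcup\bigsqcup_{m\ge 1}A_m$. Each stratum is wandering with respect to $g$: if $x$ and $g^kx$ both lie in $A_m$ with $k\ge 1$, then $g^{m+k}x=g^m(g^kx)\in A$ with $m+k>m$, contradicting $x\in A_m$; the case of $A_\infty$ is identical with the empty return set. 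Hence $A\in\sW$, and when $A$ is Borel so is every stratum since $g$ acts by Borel automorphisms.

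Finally, (4) is the special case $h:=g^N$ of the reformulation: for $n\ge 1$ one has $h^nA\cap A=g^{Nn}A\cap A=\emptyset$ because $Nn\ge N$, so $A$ is wandering with respect to $g^N$. For (5), again set $h:=g^N$ and put $A'=A\setminus\bigcup_{n\ge 1}(h^nA\cap A)$; the reformulation shows $A'$ is wandering with respect to $h$, because any $y\in h^kA'\cap A'$ with $k\ge 1$ would lie in $A'\subseteq A$ and in $h^kA$, hence in $h^kA\cap A$, contradicting $y\notin\bigcup_{n\ge 1}(h^nA\cap A)$. The identity $A=A'\cup\bigcup_{n\ge 1}(g^{Nn}A\cap A)$ then presents $A$ as a countable union of members of $\sW$, each $g^{Nn}A\cap A$ lying in $\sW$ by hypothesis since $Nn\ge N$, so $A\in\sW$ by (1). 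This last argument also re-proves (4).
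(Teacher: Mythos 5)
Your proof is correct and takes essentially the same route as the paper's: the same reformulation of wandering, the same stratification of $A$ by last visit time in (3), and the same ``remove the recurrent part'' decomposition in (5). The only (harmless) deviation is that you prove (4) directly by noting that $A$ is wandering with respect to $g^N$, whereas the paper deduces (4) from (3) via non-recurrence; both are sound.
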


\begin{proof}

    (1). It is straightforward that for every $A \in \sW$ and $B \subset A$, $B \in \sW$;  and that a countable union of elements of $\sW$ is also in $\sW$. To see that $\sW$ is $\Gamma$-invariant, observe that if  $B$ is wandering with respect to $g$  and $h \in \G$, then $h(B)$ is wandering with respect to $hgh^{-1}$.
    
    (2). For any $\G$-invariant probability measure $\mu$, if $A \in \sW$ has positive measure then there exists $B \subset A$ with $\mu(B)>0$ and $g \in \G$ with $(g^nB)_{n \in \Z}$ pairwise disjoint, so $\mu$ has infinite measure, a contradiction.
    
    (3). Let $A$ be non-recurrent with respect to $g$. For $k \in \Z$ let $A_k = \{x \in X: g^kx \in A, \forall l>k\ g^lx \notin A\}$.  Notice that $(A_k)_{k\in \Z}$ are pairwise disjoint, and $A = \sqcup_{k \geq 0} (A_k \cap A)$.  Also notice that for $n,k \in \Z$, $g^n A_k \subset A_{k-n}$, so $A_k \cap A$ is wandering, so $A \in \sW$.
    
    (4). Note that in this case for all $x \in X$,  $\sum_{n=1}^\infty 1_A(g^n(x)) \leq N+1$, so $A$ is non-recurrent.  The result follows from (3).
    
    (5).  For $n \geq N$ let $A_n = g^n A \cap A$, and let $A_0 = A\setminus (\cup_{n \geq N} A_n)$.  Each $A_n \in \sW$ by assumption, and we claim that $A_0$ is also in $\sW$.  Indeed note that for $n \geq N$, $g^nA_0 \cap A_0 \subset g^nA \cap A$, but also $g^n A_0 \cap A_0 \subset A_0$, so by (4) $A_0 \in \sW$.  So $A \in \sW$ being a countable union of sets in $\sW$.
    
\end{proof}

\begin{lemma}\label{lem:non_recurr}
If $\Gamma$ acts on $X$ and $A \subset X$ satisfies that there exists $N >0$, $k \in \N$ and $g_1,\dots,g_k \in\Gamma$ such that for any $n_1,\ldots,n_k \ge N$ we have $\bigcap_{i=1}^k g_i^{n_i}A = \emptyset$ then $A \in \sW$.
\end{lemma}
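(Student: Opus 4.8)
The plan is to prove, by induction on $k\ge 1$, the following formally stronger statement $(\star_k)$: \emph{if there exist $N>0$ and $g_1,\dots,g_k\in\Gamma$ such that $A\cap\bigcap_{i=1}^{k}g_i^{n_i}A=\emptyset$ whenever $n_1,\dots,n_k\ge N$, then $A\in\sW$.} Granting $(\star_k)$, the lemma follows at once, since its hypothesis $\bigcap_{i=1}^{k}g_i^{n_i}A=\emptyset$ in particular forces $A\cap\bigcap_{i=1}^{k}g_i^{n_i}A=\emptyset$. The base case $k=1$ is precisely part~(4) of \cref{lem:wander}.

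For the inductive step, assume $(\star_{k-1})$, and let $A$, $N$ and $g_1,\dots,g_k$ be as in the hypothesis of $(\star_k)$. The idea is to peel off the last element $g_k$ by applying part~(5) of \cref{lem:wander} with $g=g_k$ and threshold $N$, which reduces the claim to proving $B_m:=g_k^{m}A\cap A\in\sW$ for every $m\ge N$. To obtain this, fix $m\ge N$ and check that $B_m$ satisfies the hypothesis of $(\star_{k-1})$ relative to $g_1,\dots,g_{k-1}$ with the same threshold $N$: since $B_m\subseteq A$ we have $g_i^{n_i}B_m\subseteq g_i^{n_i}A$ for each $i\le k-1$, while $B_m\subseteq A$ and $B_m\subseteq g_k^{m}A$ directly, so intersecting all these inclusions gives
\[
B_m\cap\bigcap_{i=1}^{k-1}g_i^{n_i}B_m\ \subseteq\ A\cap g_k^{m}A\cap\bigcap_{i=1}^{k-1}g_i^{n_i}A\ =\ A\cap\bigcap_{i=1}^{k}g_i^{n_i}A\qquad(n_k:=m),
\]
which is empty by hypothesis as soon as $n_1,\dots,n_{k-1}\ge N$ (note $n_k=m\ge N$). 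Hence $B_m\in\sW$ by $(\star_{k-1})$, and so $A\in\sW$ by part~(5) of \cref{lem:wander}.

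The only genuinely nonobvious point is the choice of inductive statement: the lemma as phrased does not visibly reduce from $k$ to $k-1$, because after removing one generator via part~(5) of \cref{lem:wander} the residual set $g_k^{m}A\cap A$ automatically lies inside both $A$ and $g_k^{m}A$, and it is exactly this extra containment, which we record in $(\star_k)$ by intersecting with $A$ itself, that makes the induction close with one fewer element. Everything else is elementary set manipulation, and, as throughout this section, no property of $\Gamma$ as a group is used.
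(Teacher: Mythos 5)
Your proof is correct and follows essentially the same strategy as the paper's: both arguments prove a strengthened statement by induction on $k$, obtained by intersecting the hypothesis with $A$ itself, and both reduce to parts (4) and (5) of \cref{lem:wander}. The only (immaterial) difference is in the bookkeeping: the paper weakens the strengthened hypothesis from ``$=\emptyset$'' to ``$\in\sW$'' and applies the inductive hypothesis to $A$ after first placing each set $A\cap\bigcap_{i=1}^{k}g_i^{n_i}A$ in $\sW$ via part (5), whereas you keep emptiness throughout and apply the inductive hypothesis to the sets $g_k^{m}A\cap A$ before invoking part (5) on $A$.
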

\begin{proof}
We prove the following slightly stronger statement by induction on $k\in \N$:
Suppose there exists $g_1,\ldots,g_k \in \Gamma$ and $N \in \N$ such that for every $n_1,\ldots,n_k \ge N$ 
$A \cap \left( \bigcap_{i=1}^k g_i^{n_i}A \right) \in \sW$. Then $A \in \sW$.
The case $k=1$ follows directly from  \cref{lem:wander}(5). 
Now suppose there exists $g_1,\ldots,g_k,g_{k+1} \in \Gamma$ and $N \in \N$ such that for every $n_1,\ldots,n_k,n_{k+1} \ge N$ 
$A \cap \left( \bigcap_{i=1}^{k+1} g_i^{n_i}A \right) \in \sW$. For $n_1,\ldots,n_k \ge N$ let
$A_{n_1,\ldots,n_k}= A \cap \left( \bigcap_{i=1}^{k} g_i^{n_i}A \right)$. Then 
for all $n_{k+1} \ge N$ we have that $A_{n_1,\ldots,n_k} \cap g^{n_{k+1}}A_{n_1,\ldots,n_k} \subseteq A \cap \left( \bigcap_{i=1}^{k+1} g_i^{n_i}A \right) \in \sW$. By  \cref{lem:wander} (1) and (5) it follows that $A_{n_1,\ldots,n_k} \in \sW$. Since this holds for all $n_1,\ldots,n_k \ge N$, it follows that $A \in\sW$ by the induction hypothesis.
\end{proof}

\begin{lemma}\label{lem:W_weakly_dissipative}
	The sets $\sml{+}$ and $\sml{-}$ are in $\sW$.
\end{lemma}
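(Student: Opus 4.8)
The plan is to strip the existential ``witness'' quantifiers, reducing the claim to a countable family of sets, and to show each of these is wandering by reproducing, with base points shifted off the identity, the cyclic contradiction ``$a_1\gg a_2\gg\cdots\gg a_6\gg a_1$'' underlying \cref{lem:Ord_0_partition}.

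For $\vec q=(q_1,\dots,q_6)\in\N^6$ (all indices mod $6$) put
\[
C^-_{i,q}:=\{\prec\in\Ext(\ord1)\ :\ e\prec a_{i-1}^{-n}a_i^{q}\ \ \forall n\in\N\},\qquad A^-_{\vec q}:=\bigcap_{i=1}^6 C^-_{i,q_i},
\]
and define $C^+_{i,q}$, $A^+_{\vec q}$ analogously from the inequalities in the definition of $\sml{+}(a_i,a_{i+1})$. Each $C^\pm_{i,q}$ is closed and $C^\pm_{i,q}\subseteq C^\pm_{i,q+1}$, so $\sml{-}=\bigcup_{\vec q\in\N^6}A^-_{\vec q}$ and $\sml{+}=\bigcup_{\vec q}A^+_{\vec q}$. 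Since $\sW$ is a $\sigma$-ideal (\cref{lem:wander}), it is enough to show $A^-_{\vec q}\in\sW$ and $A^+_{\vec q}\in\sW$ for each fixed $\vec q$.

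Fix $\vec q$ and consider $A^-_{\vec q}$ (the case of $A^+_{\vec q}$ is symmetric, running the hexagon the other way as in Case 2 of \cref{lem:Ord_0_partition}). I would invoke \cref{lem:non_recurr} with $k=6$, taking $g_i\in\G$ to be the basic unipotent of \cref{eqn:unip} whose conjugation shears the plane $\langle a_{i-1},a_i\rangle$ through the Heisenberg relation $[a_{i-1},g_i]=a_i$ while centralizing $a_i$ --- concretely $g_i=a_{i+1}$. Two facts drive the argument: first, $g_i^{-m}\cdot\prec'\in C^-_{i,q_i}$ exactly expresses the defining inequality of $C^-_{i,q_i}$ with $e$ replaced by the base point $g_i^{m}$; second, since every $\prec'\in\Ext(\ord1)$ extends the \emph{left-invariant} partial order $\ord1$, one has $h\prec' hg$ for every $h\in\G$ and every $g$ in the positive semigroup $P$ of $\ord1$ --- i.e.\ the sheared cones $hP$ are available at every base point. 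Suppose now, towards a contradiction, that $\prec'\in\bigcap_{i=1}^6 g_i^{m_i}A^-_{\vec q}$ with $m_1,\dots,m_6$ all large. From each $g_i^{-m_i}\cdot\prec'\in A^-_{\vec q}$ extract the single inequality contributed by the factor $C^-_{i,q_i}$; pushing these six ``based'' inequalities around the hexagon via the relations $[a_{i-1},a_{i+1}]=a_i$, $[a_i,a_{i+1}]=e$ and the cones $hP$ telescopes them into a cycle $h_1\prec' h_2\prec'\cdots\prec' h_1$, contradicting transitivity and non-reflexivity. The largeness of the $m_i$ is exactly what forces consecutive sheared cones to overlap enough for the cycle to close, and the resulting threshold $N$ depends only on $\vec q$; this is why \cref{lem:non_recurr}, with its independent exponents $m_1,\dots,m_6\ge N$, is the right tool.

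The hard part is this last telescoping. Because the orders involved are not left-invariant, one cannot transport the six inequalities to a common base point and quote the deterministic proof; instead one must carry the base points $g_i^{m_i}$ through the $\SL_3$/Heisenberg arithmetic and check, by explicit but routine word computations, that the sheared cones guaranteed by $\ord1$-extension do chain around the hexagon once each $m_i$ exceeds an explicit function of $\vec q$. All the effort is in getting these quantitative estimates to fit; no new conceptual ingredient is needed. Granting the lemma, \cref{lem:wander} makes $\sml{+},\sml{-}$ null for every invariant probability measure, and then \cref{lem:Ord_0_partition} delivers \cref{thm:non_ext}.
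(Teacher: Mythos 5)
Your overall architecture matches the paper's: decompose $\sml{\pm}$ into countably many pieces indexed by the witness $q$, and kill each piece via \cref{lem:non_recurr} by deriving a cyclic contradiction $a_1\prec a_2\prec\cdots\prec a_6\prec a_1$. The reduction to the sets $A^{\pm}_{\vec q}$ and the appeal to \cref{lem:non_recurr} are fine. But the entire content of the lemma beyond that framework is the step you explicitly defer to ``explicit but routine word computations'': producing the six translated inequalities and checking that they actually chain. As written this is a plan rather than a proof, and the one concrete choice you commit to, $g_i=a_{i+1}$, makes the deferred verification doubtful rather than routine. Translating $C^-_{i,q}$ by $a_{i+1}^{m}$ turns the defining condition $e\prec a_{i-1}^{-n}a_i^{q}$ into $a_{i+1}^{m}\prec a_{i+1}^{m}a_{i-1}^{-n}a_i^{q}$, and commuting $a_{i+1}^{m}$ past $a_{i-1}^{-n}$ via $[a_{i-1},a_{i+1}]=a_i$ injects a factor $a_i^{\pm nm}$ whose exponent grows with the universally quantified $n$; you give no indication of how these six families of inequalities close up into a cycle, and I do not see that they do.

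The paper's proof avoids all of this by choosing the translating element \emph{inside} the commuting pair: it applies \cref{lem:non_recurr} with $g_i=a_{i-1}^{-1}$. For $m>q$ and $\prec\in a_{i-1}^{-m}B(q,i)$ one then gets, in one line,
\[
a_{i-1}^{q}\ \prec\ a_{i-1}^{m}\ \prec\ a_{i-1}^{m}a_i^{q}a_{i-1}^{-m}\ =\ a_i^{q},
\]
where the first inequality holds because $\prec$ extends the left-invariant $\ord1$ and $a_{i-1}^{m-q}$ is $\ord1$-positive, the second is the translated defining condition of $B(q,i)$, and the equality holds because $a_{i-1}$ and $a_i$ commute. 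Chaining $a_{i-1}^{q}\prec a_i^{q}$ around $i=1,\dots,6$ gives the cycle on the six points $a_i^{q}$, with threshold $N=q+1$ and no quantitative estimates at all: the Heisenberg shear $[a_{i-1},a_{i+1}]=a_i$ and the sheared cones belong to \cref{lem:A_B_C_rel_1}, not to this lemma. To repair your write-up, replace $g_i=a_{i+1}$ by $a_{i-1}^{-1}$ and carry out the displayed computation; with that substitution your argument becomes the paper's.
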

\begin{proof}
	We will prove that  $\sml{-} \in \sW$, as the proof that $\sml{+} \in \sW$ is completely analogous. For $q \in \N$ and $1 \le u \le 6$, denote  $B(q,i) = \{\prec \in \Ext(\ord1) \ | \ e \prec a_i^q a_{i-1}^{-n}, \ \forall n \in \N\}$. Observe that \[\sml{-} = \bigcup_{q \in \N} \left(\bigcap_{i=1}^6 B(q,i) \right).\]  
	Since $\sW$ is closed under countable unions, it suffices to show that $\bigcap_{i=1}^6 B(q,i) \in \sW$ for every $q \in \N$. By \cref{lem:non_recurr} is suffices to show that for any $n_1,...,n_6 >q$ we have $\bigcap_{i=1}^6 a^{-n_i}_{i-1} B(q_i,i) = \emptyset$.
	
	Indeed,  
	for every $m >q$ and $\prec \in a_{i-1}^{-m} B(q,i)$ we have \[a_{i-1}^q \prec a_{i-1}^{m} \prec a_{i-1}^m a_i^q a_{i-1}^{-m}=a_i^q.\] 
	In particular, if $n_1,\ldots,n_6 > q$ and $\prec \in \bigcap_{i=1}^6 a^{-n_i}_{i-1} B(q,i)$, it follows that 
	$a_1^q \prec a_2^{q} \prec ... \prec a_6^{q} \prec a_1^{q}$. 
	
	
\end{proof}
\begin{prop}\label{prop:Ord_SL_3_weakly_dissipative}
	The set $\Ext(\ord1) \in \sW$, and in particular   
 there is no invariant random total order on $\Gamma$ extending $\ord1$.  
\end{prop}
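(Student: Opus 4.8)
The plan is to combine \cref{lem:Ord_0_partition} with \cref{lem:W_weakly_dissipative} and then apply the abstract machinery of \cref{lem:wander}. By \cref{lem:Ord_0_partition} we have the set-theoretic identity $\Ext(\ord1) = \sml{+} \cup \sml{-}$, and by \cref{lem:W_weakly_dissipative} each of $\sml{+}$ and $\sml{-}$ lies in the $\sigma$-ideal $\sW$. Since $\sW$ is closed under countable (in particular, finite) unions by \cref{lem:wander}(1), it follows immediately that $\Ext(\ord1) \in \sW$.

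For the ``in particular'' clause, suppose toward a contradiction that $\mu$ is a $\Gamma$-invariant Borel probability measure on $\Ord(\Gamma)$ with $\mu(\Ext(\ord1)) = 1$; such a $\mu$ is exactly what it means for $\mu$ to be an IRO extending $\ord1$ (in the sense of \cref{def:ext}, a random extension of $\ord1$). Restricting $\mu$ to $\Ext(\ord1)$, which is a $\Gamma$-invariant Borel subset of $\Ord(\Gamma)$ carrying full measure, $\Gamma$ acts on it by Borel automorphisms. But $\Ext(\ord1) \in \sW$, so by \cref{lem:wander}(2) it has zero measure with respect to every $\Gamma$-invariant probability measure, contradicting $\mu(\Ext(\ord1)) = 1$.

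One small point to address is that \cref{def:ext} also allows a more general notion of $\mu \in \IRO(\Gamma)$ extending an invariant random partial order $\nu$ via a joining; here we only need the special case where $\nu = \delta_{\ord1}$ is the point mass at the deterministic partial order $\ord1$, in which case the joining condition reduces precisely to $\mu(\Ext(\ord1)) = 1$. So no genuine obstacle arises; the content of the proposition has been entirely front-loaded into the preceding lemmas. The only ``work'' is assembling them, and there is no real obstacle — the argument is a two-line deduction once \cref{lem:Ord_0_partition} and \cref{lem:W_weakly_dissipative} are in hand.

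\begin{proof}
By \cref{lem:Ord_0_partition}, $\Ext(\ord1) = \sml{+} \cup \sml{-}$, and by \cref{lem:W_weakly_dissipative}, both $\sml{+}$ and $\sml{-}$ belong to $\sW$. Since $\sW$ is closed under finite unions by \cref{lem:wander}(1), we conclude $\Ext(\ord1) \in \sW$. Now if $\mu$ were an invariant random order on $\Gamma$ extending $\ord1$, then by definition $\mu(\Ext(\ord1)) = 1$, contradicting \cref{lem:wander}(2), which asserts that every element of $\sW$ is null for every $\Gamma$-invariant probability measure. Hence no such $\mu$ exists.
\end{proof}
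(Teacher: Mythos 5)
Your proof is correct and follows exactly the paper's argument: combine \cref{lem:Ord_0_partition} with \cref{lem:W_weakly_dissipative} to conclude $\Ext(\ord1)\in\sW$, then invoke \cref{lem:wander}(2) for the ``in particular'' clause. The extra remark reconciling \cref{def:ext} with the point-mass case is a fine clarification but does not change the route.
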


\begin{proof}
That $\Ext(\ord1) \in \sW$ follows directly from \cref{lem:Ord_0_partition} together with  \cref{lem:W_weakly_dissipative}.	The ``in particular'' part follows by \cref{lem:wander} (2).
\end{proof}  

\section{The specification property of $\Ord(\Gamma)$}\label{sec:specification}

\begin{definition}
A topological dynamical system $\Gamma \curvearrowright X$ has the \emph{ specification property} if for every $\epsilon >0$ there exists a non-empty finite  subset $F \subset \Gamma$ such that for every $x_1,x_2 \in X$ and any $K \subset \Gamma$ there exists $x \in X$ such that 
\[d(g(x),g(x_1)) \le \epsilon \mbox{ for all } g \in K,\]
and 
\[d(g(x),g(x_2)) \le \epsilon \mbox{ for all } g \in \Gamma \setminus (FK).\]

\end{definition}

\begin{remark}
The term {\it{specification property}} is not used consistently throughout the literature. Some manuscripts refer to this property as {\it{uniform specification}}, {\it{strong specification}} or as {\it{strong irreducibly}}. Other manuscripts yet, use the term specification for slightly modified notions. 
\end{remark}
\begin{remark}
It is routine to check that the specification property is  independent of the particular metric $d$. If $X$ is a totally disconnected compact space then $\Gamma \curvearrowright X$ has the specification property if and only if for every partition of $P$ of $X$ into clopen sets there exists a finite set $F \subset \Gamma$ such that for every $x_1,x_2 \in X$ and any $K \subset \Gamma$ there exists $x \in X$ such that
\[P(g(x))=P(g(x_1))  \mbox{ for all } g \in K,\]
and
\[P(g(x))=P(g(x_2)) \mbox{ for all } g \in \Gamma \setminus (FK),\]
where $P(x)$ is the partition element of $P$ containing $x$.
\end{remark}

\begin{remark}\label{rem:strongly_irreducible}
If $A$ is a finite set and $X \subseteq A^{\Gamma}$ is a $\Gamma$-subshift then it is straightforward that the specification property of $\Gamma \curvearrowright X$ is equivalent to $X$ being \emph{strongly irreducible}: There exists a finite subset $F \subset \Gamma$ such that for any $x_1,x_2 \in X$ and any $K \subset \G$ there exists $x \in X$ such that $x\mid_K = x_1\mid_{K}$
and $x\mid_{\Gamma \setminus (KF)} = x_2\mid_{\Gamma \setminus (KF)}$. 
\end{remark}

\begin{remark}\label{rem:specification_cantor_strongly_irreducible}
If $X$ is a totally disconnected compact metrizable space, than $\G \curvearrowright X$ has the specification property if and only if any subshift factor of  $\G \curvearrowright X$ is strongly irreducible.
\end{remark}

\begin{definition}
 Given $D \subseteq \Gamma$ and $\prec \in \Ord(\G)$, denote 
 \[[\prec]_D := \left\{ \tilde \prec \in \Ord(\G):~ \tilde \prec\mid_{D \times D} = \prec\mid_{D \times D}
 \right\}.\]
\end{definition}
Whenever $D \subset \Gamma$ is finite and $\prec \in \Ord(\Gamma)$, we have that $[\prec]_D$ is a clopen subset of $\Ord(\Gamma)$. The sets of the form $[\prec]_D$ for $\prec \in \Ord(\G)$ and $D$ a finite subset of $\G$ are called \emph{cylinder sets}. For instance, for $\G= \Z^2$, \cref{fig:cylord} describes the collection ``level lines'' for elements of a specific cylinder set in  $\Ord(\Gamma)$.

\begin{prop}\label{prop:Ord_Gamma_sepcification}
The action $\Gamma \curvearrowright \Ord(\Gamma)$ has the specification property.
\end{prop}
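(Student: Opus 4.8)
The plan is to work directly with the characterization of specification for totally disconnected spaces in terms of finite cylinder sets: fix a finite set $D \subset \Gamma$ with $e \in D$, two orders $\prec_1, \prec_2 \in \Ord(\Gamma)$, and an arbitrary $K \subset \Gamma$; I must produce $\prec \in \Ord(\Gamma)$ agreeing with $g \cdot \prec_1$ on $D$ for every $g \in K$ and with $g \cdot \prec_2$ on $D$ for every $g \notin FK$, where $F$ depends only on $D$ (this corresponds to $\epsilon$ via the cylinder base). Recalling that $g \cdot \prec_1$ restricted to $D \times D$ is determined by $\prec_1$ restricted to $gD \times gD$, the requirement is: $\prec$ must agree with $\prec_1$ on $gD \times gD$ for all $g \in K$, i.e. on $KD \times KD$ locally, and with $\prec_2$ on $gD \times gD$ for $g \notin FK$. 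The natural candidate for the ``buffer'' set is $F = D D^{-1}$ (or a symmetrized version $D D^{-1} \cup D^{-1} D \cup \{e\}$), so that the two regions $KD$ and $\Gamma \setminus (FK)D$ are ``separated''.

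The construction I would use is the one implicit in the uniform-random-order picture: encode an order as an injective real-valued function on $\Gamma$ and build a new function by gluing. Concretely, realize $\prec_i$ as $\prec_{u_i}$ for injective $u_i : \Gamma \to \R$ (using the fact from \cref{example:Z_2_ext} that every total order arises this way); then rescale $u_1$ into a bounded window, say $u_1(\Gamma) \subset (0,1)$, and rescale $u_2$ so that its values on the ``far'' region lie outside $[0,1]$ — but this is too crude because $u_2$'s values are spread over all of $\R$. Instead, the cleaner approach is \emph{local}: I only need to control finitely many comparisons near each point. So I would define $\prec$ by specifying, for the ``close'' region $KD$, that it looks like $\prec_1$; for the ``far'' region, that it looks like $\prec_2$; and on the (bounded-radius) interface, make an arbitrary but consistent choice. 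The key technical device to guarantee consistency (transitivity, totality) is to pick a left-invariant-ish scaffolding — e.g. first fix an auxiliary order $\prec_0$ on $\Gamma$ (any one), partition $\Gamma$ into the ``near'' block $N = KD$, the ``far'' block, and handle comparisons between blocks using a single coarse function that is constant on each $gD$-neighborhood; then within each block use $\prec_1$ resp. $\prec_2$. Transitivity within a block is inherited; transitivity across blocks holds because the coarse function strictly separates the blocks.

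I expect the main obstacle to be verifying \textbf{transitivity} of the glued relation when $K$ is infinite and the near/far regions interleave in complicated ways: a naive ``take $\prec_1$ on $KD$ and $\prec_2$ elsewhere'' relation need not be transitive, since a chain $x \prec y \prec z$ could pass from the far region into the near region and back. This is precisely why the buffer $F = DD^{-1}$ and a globally-defined separating function (rather than a purely local patch) are needed: one arranges that every element of $\Gamma$ gets a real value, with elements of the near region assigned values in one interval, elements of the far region in another, and within each region the value refines $u_1$ resp. $u_2$; then $\prec := \prec_u$ is automatically a genuine total order, and one only has to check that the finite restrictions $\prec\mid_{gD \times gD}$ come out right — which, because $gD$ for $g \in K$ lies entirely in the near region and $gD$ for $g \notin FK$ lies entirely in the far region (this is where the choice $F = DD^{-1}$ is used), reduces to the order-preservation built into the construction. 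Once that combinatorial bookkeeping is set up, the verification is routine; I would present it via the function-encoding to keep transitivity automatic.
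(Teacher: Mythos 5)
Your proposal is correct and is essentially the paper's own proof: both take the buffer $F = DD^{-1}$ and glue $\prec_1$ on the translated $D$-neighbourhood of $K$ with $\prec_2$ on its complement, declaring the first block to lie entirely below the second so that totality and transitivity come for free; your real-valued-function encoding is only a cosmetic repackaging of the paper's three-case definition of the glued order. The one bookkeeping slip is that with the paper's convention $x\,[g\cdot\prec]\,y \iff g^{-1}x \prec g^{-1}y$ the near region should be $K^{-1}D$ rather than $KD$, but this does not affect the argument.
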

\begin{proof}
It suffices to show that for any finite $D \subset \G$ there exists a finite $F\subset \G$ such that for any $\prec_1,\prec_2 \in \Ord(\G)$ and $K \subset \Gamma$ there exists $\prec \in \Ord(\G)$ 
that ``$D$-shadows $\prec_1$ on $K$'' and ``$D$-shadows $\prec_2$ on $\G \setminus (FK)$'' in the following sense:
\[[g(\prec)]_D=[g(\prec_1)]_D  \mbox{ for all } g \in K,\]
and
\[[g(\prec)]_D=[g(\prec_2)]_D \mbox{ for all }\mbox{ for all } g \in \Gamma \setminus (FK).\]
Indeed, given a finite subset $D \subset \Gamma$, let $F = DD^{-1}$.  Then for any $K \subset \Gamma$  and $\prec_1,\prec_2 \in \Ord(\G)$ one can define $\prec \in \Ord(\G)$ by declaring that $g_1 \prec g_2$ if and only if one of the following holds:
\begin{enumerate}
    \item $g_1,g_2 \in K^{-1}D$ and $g_1 \prec_1 g_2$.
    \item $g_1,g_2 \not\in K^{-1}D$ and $g_1 \prec_2 g_2$.
    \item $g_1 \in K^{-1}D$ and $g_2 \in \Gamma \setminus K^{-1}D$.
\end{enumerate}
Note that the condition that $\prec$ $D$-shadows $\prec_1$ on $K$ is implied by $[\prec]_{K^{-1}D}=[\prec_2]_{K^{-1}D}$, which is given by (1).  Similarly the condition that $\prec$ $D$-shadows $\prec_2$ on $\G\setminus (FK)$ is implied by $[\prec]_{(\G\setminus (FK))^{-1}D}=[\prec_1]_{(\G\setminus (FK))^{-1}D}$.  Now we claim to have chosen $F$ so that $(\G\setminus (FK))^{-1}D \cap K^{-1}D = \emptyset$, which is equivalent to $\G\setminus (K^{-1}F^{-1}) \cap K^{-1}DD^{-1} = \emptyset$, which is true since $F = DD^{-1} = F^{-1}$.  It follows that (2) implies $\prec$ $D$-shadows $\prec_2$ on $\G\setminus (FK)$, and $(3)$ ensures that $\prec$ is total. 
\end{proof}

\begin{remark}\label{remark:borel_specification_ord_gamma}
The proof of \cref{prop:Ord_Gamma_sepcification} actually shows an a priori stronger property for $\G \curvearrowright  \Ord(\G)$: For any finite subset $D \subset \G$ there exists $F \subset \G$ and an equivariant \footnote{The equivariance  is with respect to the action of   $\Gamma$  on subsets given by  $g.K = Kg^{-1}$.}  Borel function $\Phi:\Ord(\G) \times \Ord(\G) \times 2^\Gamma \to \Ord(\G)$  such that $\Phi(\prec_1,\prec_2,K)$ $D$-shadows $\prec_1$ on $K$ and $D$-shadows $\prec_2$ on $\G \setminus (FK)$. We will refer to this property as the  \emph{Borel equivariant specification property}.  In fact in the proof of \cref{prop:Ord_Gamma_sepcification} the map $\Phi$ is continuous. We do not know if in general this property follows automatically from the ``usual'' specification property.
\end{remark}

\section{Ergodic universality of $\Ord(\Gamma)$: Realizing ergodic systems via IROs}\label{sec:ord_ergodic_universality}

The purpose of this section is to provide a characterization of ergodic $\Gamma$-actions that can be ``realized as IROs'', in the ergodic theoretic sense. This turns out to be a rather general class, as it includes all essentially free ergodic actions. 
We briefly recall the notion of an invariant random subgroup, which turns out to be key in the characterization of ergodic $\Gamma$-actions that can be realized as IROs.
Let $\Sub(\G)$ be the set of all subgroups of $\G$. The space $\Sub(\G)$ can be naturally viewed as a closed, hence compact subset of $\{0,1\}^\G$. The group $\G$ acts on $\Sub(\G)$ by conjugation.  
Recall that a probability measure on $\Sub(\Gamma)$ that is invariant with respect to the action of $\Gamma$ by conjugation is called an \emph{invariant random subgroup} \cite{AbertGlasnerVirag2014}, abbreviated by IRS.
Let $\IRS(\G) \subset \Prob(\Sub(\G))$ denote the space of invariant random subgroups for $\G$.
For any $\G$-space $X$, and any $x \in X$,  we denote by $\stab(x) \in \Gamma(x)$  the stabilizer subgroup of $x$. 
A basic observation in  \cite{AbertGlasnerVirag2014} is that any p.m.p action $\Gamma \curvearrowright (X,\mathcal{B},\mu)$, gives rise to a invariant random subgroup   via the stabilizer map $\stab: X\to \Sub(\G)$. That is, $\stab_{*}(\mu) \in \IRS(\Gamma)$. Conversely, it was shown in \cite{AbertGlasnerVirag2014} that any invariant random subgroup is realizable as the stabilizer of some p.m.p action. 
The invariant random subgroups arising  from  invariant random orders are subject to an obvious orderability constraint:
\begin{prop}\label{prop:stab_orderable}
For any $\prec \in \Ord(\G)$, the subgroup $\stab(\prec) < \G$ is left-orderable.
\end{prop}
\begin{proof}
Fix $\prec \in \Ord(\G)$.  Let $\G_0 = \stab(\prec)$. By definition, for any $g \in \G_0$, $g(\prec) = \prec$.  In particular, $g(\prec|_{\G_0\times \G_0}) = \prec|_{\G_0\times \G_0}$ - the restriction of $\prec$ to $\G_0$ is also $g$-invariant.  Thus $\prec|_{\G_0\times \G_0}$ is a left-invariant order on $\G_0$.     
\end{proof}

\begin{cor}
		For any invariant random order on $\Gamma$ the stabilizer is almost surely an orderable subgroup. 
		Namely, if  $\mu \in \IRO(\Gamma)$ then 
		\[\mu \left( \prec \in \Ord(\Gamma) :~ \stab(\prec) \mbox{ is left-orderable }\right)=1.\]	
\end{cor}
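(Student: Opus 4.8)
The statement to prove is the corollary immediately following Proposition~\ref{prop:stab_orderable}: for any $\mu \in \IRO(\Gamma)$, the stabilizer $\stab(\prec)$ is left-orderable for $\mu$-almost every $\prec \in \Ord(\Gamma)$. The plan is essentially to observe that this is an immediate consequence of Proposition~\ref{prop:stab_orderable} together with the fact that $\mu$ is supported on $\Ord(\Gamma)$. First I would recall that Proposition~\ref{prop:stab_orderable} asserts that $\stab(\prec)$ is left-orderable for \emph{every} $\prec \in \Ord(\Gamma)$, not merely almost every one. Since $\mu$ is a Borel probability measure concentrated on $\Ord(\Gamma)$, the set $\{\prec \in \Ord(\Gamma) : \stab(\prec) \text{ is left-orderable}\}$ is all of $\Ord(\Gamma)$, hence has full measure.

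The one point that warrants a sentence of care is measurability: strictly speaking, to write ``$\mu(\cdots) = 1$'' one should know that the set in question is $\mu$-measurable. But here this is trivial precisely because the set is the whole space $\Ord(\Gamma)$, which is certainly Borel (indeed clopen, being the ambient space itself). So no genuine measurability argument is needed; the deterministic Proposition~\ref{prop:stab_orderable} does all the work, and the probabilistic conclusion is purely formal. If one wanted the measurability statement in a form that survives passing to subsets, one could note that $\prec \mapsto \stab(\prec)$ is a Borel map $\Ord(\Gamma) \to \Sub(\Gamma)$ and that left-orderability of a subgroup is a Borel condition on $\Sub(\Gamma)$, but this is not needed for the corollary as stated.

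Concretely I would write: Fix $\mu \in \IRO(\Gamma)$. By Proposition~\ref{prop:stab_orderable}, for every $\prec \in \Ord(\Gamma)$ the restriction $\prec|_{\stab(\prec) \times \stab(\prec)}$ is a left-invariant total order on $\stab(\prec)$, so $\stab(\prec)$ is left-orderable. Hence
\[
\left\{ \prec \in \Ord(\Gamma) :~ \stab(\prec) \text{ is left-orderable} \right\} = \Ord(\Gamma),
\]
and since $\mu$ is a probability measure on $\Ord(\Gamma)$ we get $\mu\left( \prec \in \Ord(\Gamma) :~ \stab(\prec) \text{ is left-orderable} \right) = 1$.

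\textbf{Main obstacle.} There is essentially no obstacle: the content is entirely in Proposition~\ref{prop:stab_orderable}, and the corollary is a one-line quantifier bookkeeping step (``true for every point'' $\Rightarrow$ ``true for $\mu$-a.e.\ point''). The only thing to be mildly attentive to is stating it in a way that makes the trivial measurability evident, which the formulation above does by identifying the relevant set with the ambient space.
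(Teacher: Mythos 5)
Your proof is correct and is exactly the argument the paper intends: the corollary is stated without proof as an immediate consequence of \cref{prop:stab_orderable}, since that proposition holds for every $\prec \in \Ord(\Gamma)$ and so the relevant set is the whole space. Your additional remark on measurability is accurate but, as you note, unnecessary here.
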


\begin{thm}\label{thm:universality}
	Let $\Gamma$ be  a countable group , and  $\Gamma \curvearrowright (X,\mathcal{B},\mu)$ an ergodic probability-preserving $\Gamma$-action not supported on a finite set. Then $\Gamma \curvearrowright (X,\mathcal{B},\mu)$ is measure-theoretically isomorphic to a $\Gamma$-invariant measure on $\Ord(\Gamma)$ if and only if 
	there exists an equivariant measurable function $\pi:X \to \pOrd(\Gamma)$ such that for almost every $x \in X$, the restriction of $\pi(x) \in \pOrd(\Gamma)$ to the stabilizer $\stab(x) < \Gamma$ is a left-invariant total order on $\stab(x)$.
\end{thm}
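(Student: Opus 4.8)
The "only if" direction is essentially immediate: if $\Gamma \curvearrowright (X,\mathcal{B},\mu)$ is isomorphic to a $\Gamma$-invariant measure $\nu$ on $\Ord(\Gamma)$, then (pushing forward the isomorphism) we may as well assume $X = \Ord(\Gamma)$ and $\mu = \nu$, and take $\pi$ to be the inclusion $\Ord(\Gamma) \hookrightarrow \pOrd(\Gamma)$. For $\nu$-a.e.\ $\prec$, the stabilizer $\stab(\prec)$ fixes $\prec$, so by \cref{prop:stab_orderable} (and its proof) the restriction $\prec\mid_{\stab(\prec)\times\stab(\prec)}$ is a left-invariant \emph{total} order on $\stab(\prec)$; this is exactly the asserted property of $\pi$.

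The substance is the "if" direction. The plan is to start from the given equivariant measurable $\pi : X \to \pOrd(\Gamma)$ and randomly extend each partial order $\pi(x)$ to a total order on $\Gamma$ in an equivariant way, using extra randomness that is "generic" enough that the resulting map $X \times \Omega \to \Ord(\Gamma)$ is injective on a conull set — so that the pushforward measure on $\Ord(\Gamma)$ is isomorphic to $X$ itself (not merely a factor). Concretely, I would fix a free ergodic p.m.p.\ action $\Gamma \curvearrowright (Y,\nu)$ that is disjoint from (or at least not a factor of) $\Gamma \curvearrowright X$ — for instance a Bernoulli action — and use $Y$ to produce, for $\nu$-a.e.\ $y$, an injective function $u_y : \Gamma \to \mathbb{R}$ whose induced total order $\prec_{u_y}$ refines $\pi(x)$ on each $\stab(x)$-coset block while being determined by $y$ on the "transversal" directions. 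The key point is that $\pi(x)$ restricted to $\stab(x)$ is already \emph{total} and $\stab(x)$-invariant, so on each coset $g\,\stab(x)$ there is a canonical linear order coming from $\pi(x)$; one then needs to interleave these countably many linearly ordered blocks into a single total order on $\Gamma$, equivariantly and measurably in $(x,y)$. This interleaving can be done exactly as in the proof of the earlier proposition on extending an IRO on $\Delta$ to an IRO on $\Gamma$ (the construction there with $[0,1]^{\Gamma/\Delta}$), but now with the subgroup $\Delta = \stab(x)$ varying measurably with $x$: assign to each coset $g\,\stab(x)$ an i.i.d.\ uniform label in $[0,1]$ read off from $y$, order the blocks by these labels, and order within a block by $\pi(x)$. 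Equivariance follows because $\pi$ and $\stab$ are equivariant and the labels transform correctly; measurability is routine.

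The main obstacle is ensuring the map $(x,y)\mapsto \prec_{u_y}$ is essentially injective as a function of $x$, so that the pushforward on $\Ord(\Gamma)$ carries a copy of $X$ and not a proper factor. Recovering $x$ from $\prec_{u_y}$ (together with knowing which system we built it from) requires that $x$ be reconstructible from the total order; the reconstruction should proceed by reading off $\pi(x)$ — i.e.\ the full partial order — from the total order. But $\pi(x)$ is generally \emph{not} visible inside $\prec_{u_y}$, since the random interleaving can scramble incomparable pairs in either direction. The fix is to build redundancy into the randomness: rather than only labeling cosets, encode the entire point $x$ (equivalently, enough of $\pi(x)$ and the $X$-coordinates) into the fine structure of $u_y$ on some sparse, canonically-locatable subset of $\Gamma$ — using strong irreducibility / the Borel equivariant specification property of $\Ord(\Gamma)$ from \cref{rem:strongly_irreducible,remark:borel_specification_ord_gamma} to graft a coded copy of $x$ onto the order without destroying the extension property on $\stab(x)$. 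I would also need to check that the extra system $Y$ contributes no new invariants: taking $Y$ Bernoulli and appealing to relative ergodicity (or simply that the only thing we keep from $Y$ is a null-isomorphism-invariant amount of entropy that can be subtracted off) ensures $X\times Y \to \Ord(\Gamma)$ factors back through $X$. Handling the non-free part carefully — that the coding respects the $\stab(x)$-periodicity forced on $\pi(x)$ — is exactly where the orderability hypothesis on the IRS is used, and is the delicate step of the argument.
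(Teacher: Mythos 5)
Your ``only if'' direction matches the paper's and is fine. The gap is in the ``if'' direction, and it is exactly at the point you flag as the ``main obstacle'': injectivity. By introducing an auxiliary system $(Y,\nu)$ to supply i.i.d.\ labels for the cosets of $\stab(x)$, you produce a factor of $X\times Y$ that genuinely depends on the $Y$-coordinate, so the pushforward measure on $\Ord(\Gamma)$ will in general contain $X$ as a proper factor rather than be isomorphic to $X$. Your two proposed repairs do not close this: ``subtracting off'' the entropy contributed by $Y$ is not an operation available at the level of measure-theoretic isomorphism, and grafting a coded copy of $x$ onto the order via specification only addresses recoverability of $x$, not the elimination of the extra $Y$-randomness --- the two requirements (the order should determine $x$, and should carry no information beyond $x$) pull in opposite directions in your construction, and you give no mechanism that achieves both simultaneously.

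The paper's proof avoids auxiliary randomness altogether, and this is the idea you are missing. Since $\mu$ is ergodic, invariant, and not supported on a finite orbit, it is non-atomic, so one may take $(X,\mathcal B,\mu)=([0,1],\mathrm{Leb})$. Then the orbit map $g\mapsto g(x)\in[0,1]$ is itself an injective labelling of $\Gamma/\stab(x)$, and one defines $g\prec_x h$ iff $g(x)<h(x)$, breaking ties (which occur exactly when $g^{-1}h\in\stab(x)$) using the total order that $\pi(x)$ induces on $\stab(x)$. This is equivariant and deterministic in $x$. Injectivity is then proved not by decoding $\pi(x)$ from the order --- which, as you correctly observe, is not visible in it --- but by recovering the single real number $x$ via a pointwise ergodic sequence of measures $(\nu_n)$ on $\Gamma$: almost surely $x=\lim_n\nu_n(\{h:\ h\prec_x e\})$, which gives an explicit equivariant Borel inverse. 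So the reconstruction problem you were trying to solve with coding and specification is solved by an ergodic theorem applied to the indicator of a ray in the order, and no disjointness or entropy bookkeeping is needed.
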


\begin{proof}

One direction is trivial:
Suppose $\G\curvearrowright (X,\mathcal{B},\mu)$ is measure isomorphic to an IRO on $\G$ via $\Phi: X \to \Ord(\G)$.  Let $x\in X$. Since $\Ord(\G) \subseteq \pOrd(\G)$, we can take $\pi=\Phi$, then clearly the restriction of $\pi(x)$ to $\stab(x)=\stab(\Phi(x))$ is a total order on $\stab(x)=\stab(\Phi(x))$, which is furthermore invariant (as explained in the proof of \cref{prop:stab_orderable}).

	Let $\Gamma \curvearrowright (X,\mathcal{B},\mu)$ be an ergodic probability  preserving $\Gamma$-action,
	and suppose $\pi:X \to \pOrd(\Gamma)$ is an equivariant measurable function such that for almost every $x \in X$, the restriction of $\pi(x) \in \pOrd(\Gamma)$ to the stabilizer $\stab(x) < \Gamma$ is an left-invariant total order on $\stab(x)$.
	By ergodicity, since  $\mu$ is not supported on a finite orbit, it has no atoms,
	%
	%
	so $(X,\mathcal{B},\mu)$ is a standard Lebesgue space with a non-atomic probability measure. We can assume  that $X=[0,1]$, that $\mathcal{B}$ is the Lebesgue $\sigma$-algebra , and that $\mu$ is Lebesque measure. 
	Given $x \in X$,
	denote by $\tilde \prec_x \in \Ord(\stab(x))$ the restriction of $\pi(x) \in \pOrd(\Gamma)$ to $\stab(x)$.
	Define $\prec_x \in \Ord(\Gamma)$ by $g \prec_x h$ iff $g(x) < h(x)$ or $g(x)=h(x)$ and $e \tilde \prec_x  g^{-1}h$. Because $g(x)=h(x)$ if and only if $ g^{-1}h \in \stab(x)$, it follows that indeed $\prec_x \in   \Ord(\Gamma)$. The map $\Phi:X \to \Ord(\Gamma)$ given by  $\Phi(x):= \prec_x$ is clearly Borel and equivariant. It remains to check that  it is injective on a set of full measure. We do so by explicitly describing a Borel inverse.
	
	We recall the notion of a \emph{pointwise ergodic sequence of measures} for a group $\Gamma$. A sequence of probability measures $(\nu_n)_{n=1}^\infty$ on $\Gamma$ is called \emph{pointwise ergodic for $\Gamma$} if for any probability preserving $\Gamma$-action $\Gamma \curvearrowright (X,\mathcal{B},\mu)$ and any $f \in L^1(X,\mathcal{B},\mu)$ we have $\int f(g(x)) \nu_n(g) \to \int f d\mu$ for $\mu$-almost every $x\in X$.  Any countable group $\Gamma$ admits a pointwise ergodic sequence of measures \cite{kakutani1951random,oseledec65}, for instance the convolution powers of a symmetric probability measure on $\Gamma$ whose support generates $\Gamma$.  For background and a historical account see for instance \cite{BN:13}. 
	Let $(\nu_n)_{n=1}^\infty$  be a pointwise ergodic sequence of measures on $\Gamma$.
	We claim that almost surely we have
	\[ x = \lim_{n \to \infty} \nu_n(\left\{h\in \Gamma:~ h \prec_x e\}\right).\]
	
	Indeed, this is equivalent to showing that for   every $t \in [0,1] \cap \mathbb{Q}$, and almost every $x \in [0,t] $ we have 
	\[
	\lim_{n \to \infty} \nu_n\left(\left\{h\in \Gamma:~ h \prec_x e\right\}\right) \le t.\]
	
	But by definition $h \prec_x e$ implies that $h(x) \le x$. It follows that for any $x \in [0,t]$
	\[ \nu_n\left(\left\{h\in \Gamma:~ h \prec_x e\right\}\right)  \le \nu_n\left(\left\{h\in \Gamma:~ h(x) \le  t \right\}\right). \]
	
	By pointwise ergodicity of $\nu_n$, we have that for almost every $x$,
	\[ \lim_{n \to \infty} \nu_n\left(\left\{h\in \Gamma:~ h(x) \le  t \right\}\right) = \mu([0,t]) = t.\]
	This shows that the equivariant  Borel map $\Psi:\Ord(\Gamma) \to [0,1]$ given by
	\[\Psi(\prec )= \liminf_{n \to \infty} \nu_n\left(\left\{h\in \Gamma:~ h \prec e\right\}\right) \]
	Satisfies $\Psi(\Phi(x))=x$ for almost every $x$.
\end{proof}
In particular we have: 
\begin{cor}\label{cor:free_realization}
For any countable group $\Gamma$, any essentially free, ergodic probability preserving $\Gamma$-action can be realized as an IRO.
\end{cor}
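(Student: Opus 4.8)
The plan is to deduce Corollary~\ref{cor:free_realization} directly from \cref{thm:universality} by checking that for an essentially free action the condition in the theorem is vacuously satisfiable. First I would recall that essential freeness means that $\stab(x) = \trivgp$ for $\mu$-almost every $x \in X$. Next I would observe that the hypothesis ``ergodic, probability preserving, not supported on a finite set'' required by \cref{thm:universality} holds automatically here: an essentially free action on a finite set would force the finitely many point-stabilizers to be trivial, so $\Gamma$ would be finite, contradicting that we may take $\Gamma$ infinite — but in fact it is cleaner to simply add the (harmless, and already implicit) standing assumption that the action is not supported on a finite orbit, or to note that if $\Gamma$ is finite the only left-orderable subgroup is trivial and the statement degenerates. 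I will just invoke \cref{thm:universality} under its stated hypotheses.

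The substantive point is producing the equivariant measurable map $\pi : X \to \pOrd(\Gamma)$ whose restriction to each $\stab(x)$ is a left-invariant total order on $\stab(x)$. When $\stab(x) = \trivgp$ this last requirement is empty — the restriction of \emph{any} partial order to the trivial group is trivially a left-invariant total order on $\trivgp$ — so any equivariant measurable map into $\pOrd(\Gamma)$ will do. The simplest choice is a constant-distribution one: take the uniform random order construction from \cref{sec:general}, i.e. pick $\omega \in [0,1]^\Gamma$ at random (independently of $x$) and let $\pi(x) := \Phi(\omega) \in \Ord(\Gamma) \subset \pOrd(\Gamma)$. To stay inside the framework of an honest function on $X$ rather than a random one, I would instead enlarge $X$ to $X \times [0,1]^\Gamma$ with the product measure — still ergodic? not necessarily, but one does not need ergodicity of the enlargement, only a measurable equivariant $\pi$ — or, even more simply, note that since $X$ is a non-atomic standard Lebesgue space it carries an equivariant measurable map to $\Ord(\Gamma)$ only if... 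Actually the cleanest route avoids enlargement entirely: fix \emph{any} total order $\prec_0 \in \Ord(\Gamma)$ whatsoever and set $\pi(x) := g_x \cdot \prec_0$ where — no, there is no canonical $g_x$. I will therefore present the argument via the skew product: apply \cref{thm:universality} to the action $\Gamma \curvearrowright (X \times [0,1]^\Gamma, \mu \times \Lambda)$, with $\pi$ the second-coordinate projection composed with $\omega \mapsto \Phi(\omega)$; essential freeness of $\Gamma \curvearrowright X$ gives essential freeness of the skew product, the skew product is ergodic (being a factor-wise... ) — here one uses that $\Gamma \curvearrowright ([0,1]^\Gamma,\Lambda)$ is a Bernoulli action, hence weakly mixing, so the product with any ergodic action is ergodic.

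Hence by \cref{thm:universality} the skew product $\Gamma \curvearrowright (X \times [0,1]^\Gamma, \mu \times \Lambda)$ is isomorphic to an IRO on $\Gamma$; but since $\Gamma \curvearrowright ([0,1]^\Gamma, \Lambda)$ is a factor of $\Gamma \curvearrowright X$ is \emph{false} in general — rather, one wants the skew product to be isomorphic to $X$ itself, which it is \emph{not}. So the honest statement one gets for free is only that the skew product is realized as an IRO. To recover $X$ on the nose one must instead run the proof of \cref{thm:universality} directly with $\stab(x)$ trivial and with $\pi$ chosen measurably on $X$ alone; and here I would appeal to the fact that any non-atomic standard Borel $\Gamma$-space admits \emph{some} equivariant measurable map to $\Ord(\Gamma)$ — indeed the construction inside the proof of \cref{thm:universality}, namely $g \prec_x h \iff g(x) < h(x)$ (under an identification $X \cong [0,1]$), already \emph{is} such a map when stabilizers are trivial, requiring no auxiliary $\pi$ at all. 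So in the end the corollary is immediate: set $\pi(x)$ to be this order $\prec_x$ itself (viewed in $\pOrd(\Gamma)$); its restriction to $\stab(x) = \trivgp$ is trivially left-invariant and total; apply \cref{thm:universality}. The only point needing a word of care is the degenerate case where $\Gamma$ is finite or the action sits on a finite set, which I dispatch by noting essential freeness then forces $\Gamma = \trivgp$, where the claim is trivial. I expect the main (very minor) obstacle to be exactly this bookkeeping about the ``not supported on a finite set'' hypothesis.
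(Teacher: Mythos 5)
Your final argument is essentially the paper's: the condition in \cref{thm:universality} on the restriction of $\pi(x)$ to $\stab(x)$ is vacuous when stabilizers are trivial, so any equivariant measurable $\pi:X\to\pOrd(\Gamma)$ will do. The paper simply takes $\pi(x)$ to be the identity relation for every $x$ --- a constant map to a $\Gamma$-fixed point of $\pOrd(\Gamma)$, hence automatically equivariant --- which renders all of your detours (the skew product with $[0,1]^\Gamma$, and the fallback to the order $\prec_x$ constructed inside the theorem's proof) unnecessary. One correction: an essentially free ergodic action supported on a finite set forces $\Gamma$ to be \emph{finite}, not trivial; this degenerate case falls outside the hypotheses of \cref{thm:universality} (and is silently ignored by the paper as well), though the conclusion still holds there since by \cref{prop:stab_orderable} any total order on a finite group has trivial stabilizer, so the uniform measure on the orbit of any $\prec\in\Ord(\Gamma)$ realizes the left-translation action.
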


\begin{proof}

If  $\Gamma \curvearrowright (X,\mathcal{B},\mu)$ is ergodic and essentially free, the map $\pi:X \to \pOrd(\Gamma)$ defined by letting $\pi(x)$ be the identity relation,  satisfies the required property that the restriction of $\pi(x)$ to the trivial subgroup is an invariant total order. The fact that the action is essentially free means that $\stab(x)$ is almost surely equal to the trivial subgroup.
\end{proof}

\begin{cor}\label{orederable_realization}
If $\Gamma$ is left-orderable, any infinite ergodic action of $\Gamma$ on a space can be realized as an IRO.
\end{cor}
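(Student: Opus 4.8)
The plan is to deduce this immediately from \cref{thm:universality}. Suppose $\Gamma$ is left-orderable and fix once and for all a left-invariant total order $\prec_0\in\Ord(\Gamma)^{\Gamma}$, which exists by hypothesis. I read ``infinite ergodic action'' as an ergodic probability-measure-preserving action $\Gamma\curvearrowright(X,\mathcal B,\mu)$ that is not supported on a finite set; since an atom of an ergodic p.m.p.\ action forces the action to be carried by a single finite orbit, this is precisely the hypothesis ``not supported on a finite set'' appearing in \cref{thm:universality} (equivalently, $\mu$ is non-atomic). By that theorem it then suffices to produce an equivariant measurable map $\pi:X\to\pOrd(\Gamma)$ such that for almost every $x$ the restriction of $\pi(x)$ to $\stab(x)$ is a left-invariant total order on $\stab(x)$.

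The construction of $\pi$ is trivial: take $\pi$ to be the constant map $\pi(x):=\prec_0$ for all $x\in X$. Since $\Ord(\Gamma)\subseteq\pOrd(\Gamma)$ this is a legitimate $\pOrd(\Gamma)$-valued map, it is obviously measurable, and it is equivariant because $\prec_0$ is $\Gamma$-invariant: $\pi(gx)=\prec_0=g\cdot\prec_0=g\cdot\pi(x)$.

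Next I would verify the stabilizer condition, which is really just the observation used in the proof of \cref{prop:stab_orderable}. For any subgroup $H<\Gamma$ the restriction $\prec_0\!\mid_{H\times H}$ is a total order on $H$ (totality is inherited by any subset), and it is left-invariant on $H$: if $h_1,h_2,g\in H$ with $h_1\prec_0 h_2$, then $gh_1\prec_0 gh_2$ by left-invariance of $\prec_0$ on $\Gamma$, and $gh_1,gh_2\in H$. Applying this with $H=\stab(x)$ shows that $\pi(x)$ restricts to a left-invariant total order on $\stab(x)$ for every $x$.

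With $\pi$ exhibited, \cref{thm:universality} yields that $\Gamma\curvearrowright(X,\mathcal B,\mu)$ is measure-theoretically isomorphic to a $\Gamma$-invariant measure on $\Ord(\Gamma)$, i.e.\ is realized as an IRO. There is essentially no obstacle to overcome here: the analytic content — building the order-valued model from $\pi$ and the stabilizer order, then recovering the original space via a pointwise ergodic sequence of averaging measures — is already packaged inside \cref{thm:universality}, and left-orderability is used only to supply the constant equivariant $\pi$ above. The sole point worth stating explicitly is the interpretation of ``infinite'' discussed in the first paragraph, so that the hypotheses of \cref{thm:universality} are met.
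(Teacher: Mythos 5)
Your proof is correct and follows exactly the paper's argument: take the constant map $\pi(x):=\prec_0$ for a fixed left-invariant order $\prec_0$, note it is equivariant because $\prec_0$ is a $\Gamma$-fixed point, and apply \cref{thm:universality}. The extra verifications you spell out (equivariance, the restriction to $\stab(x)$, the reading of ``infinite'') are all implicit in the paper's one-line proof and are handled correctly.
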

\begin{proof}
Suppose $\prec_0 \in \Ord(\Gamma)$ is left-invariant order. The map  $\pi:X \to \pOrd(\Gamma)$ defined by $\pi(x):=\prec_0$  satisfies the required property that the restriction of $\pi(x) \in \pOrd(\Gamma)$ to $\stab(x)$ is an invariant total order.
\end{proof}

\begin{remark}
An inspection of the proof of \cref{thm:universality} shows that the uniform total order is measure-theoretically isomorphic to the Bernoulli shift $[0,1]^\Gamma$, equipped with Haar measure.
\end{remark}

\begin{remark}
As a particular consequence of \cref{orederable_realization} we see that that
whenever $\G$ is a countable sofic group, then the action of $\G$ on $\Ord(\G)$ has infinite sofic topological entropy with respect to any sofic approximation sequence.
\end{remark}

\begin{remark}\label{rem:ergodicity_univer}
The ergodicity assumption in \cref{thm:universality} is necessary.
For instance,
the group $\Gamma = \Z$ has only $2$ 
invariant deterministic total orders so the trivial action on a space with more than $2$ points cannot be realized as an IRO.
\end{remark}

\begin{remark}
In the case $\G=\Z^d$, it follows from  
 \cite{CM:2021} that any action of $\Z^d$ having the specification property and infinite topological entropy can realize any essentially free measure preserving action as an invariant measure. Since the action of $\G$ on $\Ord(\G)$ has the specification property and infinite topological entropy, this shows that any essentially free probability-preserving action of $\Z^d$ can be realized as an IRO on $\Z^d$, so the ergodicity assumption in \cref{thm:universality} can be removed in this case. It is plausible  that the arguments of \cite{CM:2021} can be extended to more general amenable groups.
\end{remark}

\section{The structure of the simplex $\IRO(\G)$}
\label{sec:simplex_IRO}
Let us consider the space of all invariant random orders on $\G$ as a compact convex set. One can ask what properties of the group $\G$ can be detected by considering $\IRO(\G)$ as a metrizable Choquet simplex, up to affine homeomorphisms. A \emph{Poulsen simplex} is a metrizable Choquet simplex whose exteme points are dense. Lindenstrauss, Olsen and Sternfeld \cite{LOS:poulsen} have shown that there is a unique  Poulsen simplex up to affine homemomorphism. A \emph{Bauer simplex} is a metrizable Choquet simplex whose exteme points are closed.
In \cite{GW:97} Glasner and Weiss proved the following striking dichotomy for the simplex of invariant measures of the shift action $\G \curvearrowright K^\G$, for a compact set $K$: If $\G$ has property $(T)$, then $\Prob_{\Gamma}(K^{\Gamma})$ is a Bauer simplex.
If $\G$ does not have property $(T)$, then $\Prob_{\Gamma}(K^{\Gamma})$ is a Poulsen simplex.

We now show a similar dichotomy holds for $\IRO(\G)$:

\begin{thm}\label{thm:IRO_Poulsen_Bauer}
Let $\G$ be a countable group. If $\G$ has property $(T)$, then $\IRO(\G)$ is a Bauer simplex.
If $\G$ does not have property $(T)$, then $\IRO(\G)$ is a Poulsen simplex.
\end{thm}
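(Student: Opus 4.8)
\textbf{Proof plan for \cref{thm:IRO_Poulsen_Bauer}.}

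The plan is to split into the two cases exactly as in the Glasner--Weiss dichotomy, and in fact to reduce each case to the corresponding statement of \cite{GW:97} about $\Prob_\Gamma(K^\Gamma)$ by exhibiting $\IRO(\Gamma)$ as a suitable ``slice'' of an invariant-measure simplex of a subshift. First I would recall that $\Ord(\Gamma)$ is a closed, hence compact, $\Gamma$-invariant subset of the full shift $\{0,1\}^{\Gamma\times\Gamma}$ (via the identification with binary relations from \cref{sec:general}), and that the constraints cutting it out --- non-reflexivity, transitivity, total antisymmetry --- are all \emph{closed} conditions, each determined by finitely many coordinates up to a shift; equivalently, under the homeomorphism $\Phi$, $\Ord(\Gamma)$ is a subshift of $(\{0,1\}^\Gamma)^\Gamma \cong (2^\Gamma)^\Gamma$ defined by the local rules listed after \cref{example:Z_2_ext}. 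Consequently $\IRO(\Gamma)$ is precisely the face $\Prob_\Gamma(\Ord(\Gamma))$ of $\Prob_\Gamma(\{0,1\}^{\Gamma\times\Gamma})$ supported on this subshift, which is itself a metrizable Choquet simplex whose extreme points are the ergodic invariant measures on $\Ord(\Gamma)$.

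For the property (T) case I would argue that $\IRO(\Gamma)$ is a Bauer simplex, i.e. that its set of extreme (ergodic) points is closed. Here the key input is the general fact (proved in \cite{GW:97}, and valid for any compact $\Gamma$-space, not just full shifts) that when $\Gamma$ has property (T) the ergodic measures form a closed --- indeed uniformly discrete in an appropriate weak sense --- subset of $\Prob_\Gamma(X)$ for any compact metrizable $\Gamma$-space $X$; applying this with $X = \Ord(\Gamma)$ gives that the ergodic elements of $\IRO(\Gamma)$ are closed, hence $\IRO(\Gamma)$ is Bauer. (If one prefers to quote \cite{GW:97} as literally stated for $K^\Gamma$, one instead notes that $\Ord(\Gamma)$ is a subshift of $(2^\Gamma)^\Gamma$ and that the ergodic decomposition and the ``closed ergodic measures under (T)'' property pass to closed invariant subsets.) This direction is short.

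The substantial direction is: if $\Gamma$ does \emph{not} have property (T), then $\IRO(\Gamma)$ is the Poulsen simplex, i.e. its ergodic measures are dense and --- by the Lindenstrauss--Olsen--Sternfeld uniqueness theorem \cite{LOS:poulsen} --- it then suffices to show the extreme points are dense and that $\IRO(\Gamma)$ is a nontrivial (infinite-dimensional, or at least non-Bauer) simplex. Density of the ergodic measures is where the \emph{uniform specification property} of \cref{sec:specification} (\cref{prop:Ord_Gamma_sepcification}), and more precisely the Borel/continuous equivariant specification of \cref{remark:borel_specification_ord_gamma}, does the work: specification lets one glue together, over a Følner-type exhaustion or over the amenable pieces guaranteed by the failure of (T), finitely many arbitrary invariant measures with controlled weights, producing ergodic measures approximating any prescribed $\mu\in\IRO(\Gamma)$ --- this is exactly the mechanism by which Glasner--Weiss obtain density of ergodic measures for $\Prob_\Gamma(K^\Gamma)$ in the non-(T) case, and strong irreducibility of the subshift $\Ord(\Gamma)$ (\cref{rem:strongly_irreducible}) is precisely the hypothesis their argument needs. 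One must also check $\IRO(\Gamma)$ is not a singleton or a finite-dimensional simplex: this follows since $\Ord(\Gamma)$ carries the uniform random order together with, say, its images under distinct subgroups or under \cref{cor:free_realization}, giving many mutually singular ergodic IROs. I expect the main obstacle to be handling groups that lack property (T) but are non-amenable: for amenable $\Gamma$ the Følner-based gluing argument of \cite{GW:97} transfers essentially verbatim to the strongly irreducible subshift $\Ord(\Gamma)$, but in the general non-(T) case one must invoke the Glasner--Weiss machinery in the form that uses the negation of (T) directly (via approximately invariant vectors / asymptotically invariant sets in an auxiliary action) rather than amenability, and verify that this machinery only uses the specification property of the space and not any special structure of $K^\Gamma$ --- which \cref{rem:specification_cantor_strongly_irreducible} and \cref{remark:borel_specification_ord_gamma} are designed to provide.
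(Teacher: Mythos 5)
Your plan matches the paper's proof in both directions: the property (T) case is quoted directly from Glasner--Weiss, and the non-(T) case combines the Borel equivariant specification property of $\G \curvearrowright \Ord(\G)$ (\cref{remark:borel_specification_ord_gamma}) with an auxiliary weakly mixing action possessing asymptotically invariant sets (the Glasner--Weiss consequence of the failure of (T)) to produce an ergodic IRO approximating the average of two given ergodic IROs. The one detail your plan leaves implicit --- why the glued measure is ergodic --- is handled in the paper by taking an ergodic joining of $\nu_1$ and $\nu_2$, forming its product with the (weakly mixing) indicator process of the asymptotically invariant set, and pushing forward under the \emph{equivariant} specification map $\Phi$.
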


\begin{proof}
If  $\G$ has property $(T)$ it follows directly form \cite[Theorem 1]{GW:97} that $\IRO(\G)$ is a Bauer simplex. Now suppose $\G$ does not have property $(T)$. In order to prove that $\IRO(\G)$ is a Poulsen simplex,  we will show that for any two ergodic IROs  $\nu_1,\nu_2 \in \IRO(\G)$, 
the average $\frac{1}{2}(\nu_1+\nu_2)$ can be ``well weak-$*$ approximated'' by an ergodic IRO.
Specifically, it suffices to show that for any ergodic $\nu_1,\nu_2 \in \IRO(\G)$, any $\epsilon >0$ , any finite $D \subset \G$  there exists an ergodic $\nu \in \IRO(\G)$ such that 
\[
 \nu([\prec]_D) - \frac{1}{2}\left( \nu_1([\prec]_D) + \nu_2([\prec]_D)\right) < \epsilon\  ~\forall \prec \in \Ord(\G).
\]
So fix $\epsilon$ and  a finite $D \subset \G$.
By Glasner-Weiss \cite{GW:97} , there exists a weakly mixing $\G$ action on some space $(X,\mu)$ with asymptotically invariant sets. In particular there exists a measurable set $A \subset X$ with $\mu(A)=\frac{1}{2}$ such that $\mu(\bigcap_{g \in DD^{-1}}g(A)) > \frac{1}{2}-\frac{1}{2}\epsilon$ and $\mu(\bigcap_{g \in DD^{-1}}g (A^c)) > \frac{1}{2}-\frac{1}{2}\epsilon$. 
Let $\Phi:\Ord(\Gamma) \times \Ord(\Gamma)\times 2^\Gamma \to \Ord(\Gamma)$ be a Borel equivariant function witnessing the Borel equivariant specification property for $\G \curvearrowright \Ord(\G)$ as in \cref{remark:borel_specification_ord_gamma} and \cref{prop:Ord_Gamma_sepcification}. Let $\lambda$ be an ergodic joining of $\nu_1$ and $\nu_2$.  Let $\varphi: X \to 2^\G$ be the $\G$-equivariant function such that $\varphi(x)_e = 1_A(x)$. Then $\lambda\times \varphi_*\mu$ is an ergodic measure on $\Ord(\G) \times \Ord(\G) \times 2^\G$. Let $\nu$ be the push-forward of $\lambda\times \varphi_*\mu$  via $\Phi$. Then for every $\prec \in \Ord(\G)$ 
\begin{multline}
\nu_1([\prec]_D) \cdot \left(1- \mu( \bigcap_{g \in DD^{-1}}gA^c)\right) + \nu_2([\prec]_D) \cdot (1-\mu(A)) + \mu\bigg(X \setminus \Big(A^c \cup \big(\bigcap_{g \in DD^{-1}}gA^c\big)^c\Big)\bigg) \\
\ge \nu([\prec]_D) \ge
\nu_1([\prec]_D) \cdot \mu(A) + \nu_2([\prec]_D) \cdot \mu( \bigcap_{g \in DD^{-1}}gA^c).
\end{multline}

The leftmost-hand-side is bounded from above by $\frac{1}{2}\left( \nu_1([\prec]_D) + \nu_2([\prec]_D)\right) + \epsilon$ and the righthand-most-side is bounded from below by $\frac{1}{2}\left( \nu_1([\prec]_D) + \nu_2([\prec]_D)\right) - \epsilon$ which completes the proof. 
\end{proof}

\begin{remark}\label{rem:Borel_spec_T}
The proof of \cref{thm:IRO_Poulsen_Bauer} actually shows that for any countable group $\Gamma$ that does not have property $(T)$, for any action $\G \curvearrowright X$ with the Borel equivariant specification property the simplex of invariant measures is a Poulsen simplex. 
\end{remark}
The above remark motivates us to ask the following question:
\begin{quest}
Let $\Gamma$ be a countable group that does not have property $(T)$, and let $\G \curvearrowright X$ be an action with the specification property. Is the  simplex of invariant probability measures a Poulsen simplex?
\end{quest}
In view of  \Cref{rem:Borel_spec_T},
an affirmative answer to the above question would follow if it is the case that the specification property implies the Borel equivariant specification property (see \Cref{remark:borel_specification_ord_gamma}).
In this direction we have the following partial result:

\begin{prop}\label{prop:specification_on_amenable_is_poulsen}
If $\Gamma$ is a countably infinite amenable group, $O$ is compact metrizable, and $\G \curvearrowright O$ has the specification property, then the simplex of $\Gamma$-invariant probability measures on $O$ is a Poulsen simplex.
\end{prop}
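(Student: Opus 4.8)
The strategy is to reduce to the Glasner--Weiss criterion for amenable groups by exploiting the specification property to build, for any two ergodic invariant measures $\mu_1,\mu_2$ on $X$, an ergodic invariant measure approximating $\tfrac12(\mu_1+\mu_2)$ in the weak-$*$ topology. Since $X$ is compact metrizable and $\Gamma$ is amenable, the set of $\Gamma$-invariant probability measures $\Prob_\Gamma(X)$ is a nonempty metrizable Choquet simplex; it is Poulsen precisely when its ergodic measures are weak-$*$ dense, and by convexity it suffices to approximate averages of pairs of ergodic measures. Fix $\epsilon>0$ and a continuous function (or a clopen partition, if $X$ is totally disconnected; in general a finite family of continuous functions) to control the approximation, and let $\Uc$ be the associated finite open cover / neighborhood basis of the diagonal entering the definition of specification, producing a finite set $F \subset \Gamma$.

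First I would use amenability to choose a F{\o}lner-type tiling or a quasi-tiling of $\Gamma$ whose tiles are large compared to $F$, so that the $F$-boundaries of the tiles are a small proportion of the whole. Concretely, pick a finite $K \subset \Gamma$ with $|FK \setminus K| < \epsilon |K|$ and an $\epsilon$-quasi-tiling of $\Gamma$ by translates of $K$ (Ornstein--Weiss). Then I would construct a point $x \in X$ (or rather a measure) that, on a randomly chosen proportion-$\tfrac12$ subfamily of tiles, $\Uc$-shadows a generic point for $\mu_1$, and on the complementary tiles $\Uc$-shadows a generic point for $\mu_2$, using the Borel/measurable version of specification to glue across the small boundary regions. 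The cleanest way to package this randomness is to take an auxiliary free ergodic $\Gamma$-system carrying a function valued in $\{1,2\}$ whose tile-averages are close to $\tfrac12$ each — e.g. a Bernoulli system or, as in the proof of \cref{thm:IRO_Poulsen_Bauer}, any weakly mixing system with asymptotically invariant sets — and push forward the joint measure $\lambda \times (\text{labeling})$ through the specification map $\Phi$ (which exists as an equivariant Borel map whenever the action has the Borel equivariant specification property; in the amenable-general-$X$ case one either assumes this or extracts it from a measurable selection theorem).

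The verification then has two parts. One: the resulting measure $\nu$ is ergodic — this follows because $\lambda$ can be chosen ergodic and $\Phi$ is equivariant, so $\nu$ is a factor of an ergodic system, hence ergodic. Two: $\nu$ is close to $\tfrac12(\mu_1+\mu_2)$ on the chosen finite family of continuous functions — this is a Birkhoff-averaging computation: for a continuous $f$ depending (up to $\epsilon$) only on coordinates in the window $D$ defining $\Uc$, the value $f(g \cdot \Phi(\cdot))$ agrees with $f$ evaluated at the generic $\mu_i$-point for all $g$ except those within $F$ of a tile boundary, and those contribute at most $O(\epsilon)$ by the F{\o}lner estimate; averaging over $g$ in a large F{\o}lner set and over the $\tfrac12/\tfrac12$ labeling gives $\int f\,d\nu = \tfrac12\int f\,d\mu_1 + \tfrac12\int f\,d\mu_2 + O(\epsilon)$.

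The main obstacle is the same one finessed in \cref{thm:IRO_Poulsen_Bauer}: turning the topological specification property into a genuinely \emph{measurable, equivariant} gluing map so that the push-forward measure is well-defined and ergodic, and simultaneously arranging the random $\tfrac12$-$\tfrac12$ choice of which tiles follow $\mu_1$ versus $\mu_2$ in an invariant way. For a general amenable group one does not automatically have the continuous/Borel equivariant specification map that $\Ord(\Gamma)$ enjoys (\cref{remark:borel_specification_ord_gamma}), so the proof should either assume it or replace the explicit tiling by the softer device used above — coupling with a weakly mixing system possessing asymptotically invariant sets indexed by $DD^{-1}$ — which sidesteps the need for an honest tiling and makes the boundary terms automatically negligible. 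The F{\o}lner bookkeeping that controls the boundary contribution is routine once amenability is invoked, so I would not belabor it.
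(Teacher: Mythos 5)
There is a genuine gap, and it sits exactly at the point you yourself flag as ``the main obstacle.'' Your argument for both the invariance and the ergodicity of the approximating measure $\nu$ rests on pushing an ergodic measure forward through an \emph{equivariant} gluing map $\Phi$; you acknowledge that for a general action with specification no such equivariant Borel map is available, but your proposed fallback --- replacing the tiling by a coupling with a weakly mixing system carrying asymptotically invariant sets, as in the proof of \cref{thm:IRO_Poulsen_Bauer} --- does not repair this. That device only removes the need for an honest F{\o}lner tiling; the push-forward of $\lambda\times(\text{labelling})$ through $\Phi$ is $\Gamma$-invariant (let alone a factor of an ergodic system, hence ergodic) only if $\Phi$ intertwines the actions. ``Assuming'' the Borel equivariant specification property is not an option either, since the proposition hypothesizes only the plain specification property. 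So as written neither of your routes produces an invariant measure, and your ergodicity claim is circular.

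The paper resolves this by decoupling the two issues. It uses specification together with a \emph{non-equivariant} continuous selection on the hyperspace of closed subsets of $X$ (\cref{lem:continuous_selection}) to obtain a merely Borel interpolation map $\Phi:X\times X\times\{0,1\}^{\Gamma}\to X$, and pushes $\nu_1\times\nu_2\times\mu$ forward, with $\mu$ Bernoulli and an almost-invariant set of measure $\tfrac{1}{2}$. The resulting measure is \emph{not} invariant, but it lies in the set $P_\epsilon$ of measures all of whose translates $g_*\nu$ are within $\epsilon$ of $\tfrac{1}{2}(\nu_1+\nu_2)$. Since $P_\epsilon$ is a nonempty compact convex $\Gamma$-invariant set, amenability enters through a fixed-point argument producing an invariant measure in $P_\epsilon$, and extreme points of $P_\epsilon\cap\Prob_\Gamma(X)$ then supply the ergodic approximant. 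This two-step scheme --- first a possibly non-invariant measure whose entire orbit stays near the target, then averaging it into an invariant one --- is the idea your proposal is missing; without it, the F{\o}lner/quasi-tiling bookkeeping you describe only controls the empirical averages of a single point and does not by itself yield ergodicity (or even invariance) of the measure you construct.
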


\begin{proof}
Suppose $\Gamma$ is an amenable group acting on a compact metrizable space $O$. We fix once and for all a metric $d$ on $O$. Assume that $\G \curvearrowright O$ has the specification property. As in the proof of  \cref{thm:IRO_Poulsen_Bauer}, it suffices to show that for any ergodic $\nu_1,\nu_2 \in \Prob_\Gamma(O)$, any $w^*$-neighborhood $U \subseteq \Prob(O)$ of $\frac{1}{2}(\nu_1 + \nu_2)$ contains an ergodic element of $\Prob_\Gamma(O)$. Fix $\lambda \in \Prob^{\Gamma}(O \times O)$ to be an ergodic joining of $\nu_1$ and $\nu_2$. 

For the sake of definiteness we fix our attention to a basic open neighborhood. Namely fix a finite collection of continuous functions $Q \subset C(O)$ and some $\epsilon > 0$ and set from now on
$$U = U(Q,\epsilon) = \left\{\nu \in \Prob^{\Gamma}(O) \ | \ \left| \int f d\nu - \frac{1}{2} \int f  d(\nu_1 + \nu_2) \right| < \epsilon, \quad \forall f \in Q \right\}.$$
As a continuous functions on a compact space are bounded and uniformly continuous we fix $M = \max\left\{1,\sup \{\norm{f}_{\infty} \ | \ f \in Q\}\right\}$ and $\delta>0$ such that $\abs{f(o')-f(o'')} < \frac{\epsilon}{6}$ whenever $d(o',o'')<\delta$ for every $f \in Q$. 

Let \(F \subset \Gamma\) be a specification with tolerance
\(\delta/2\). After possibly enlarging $F$ we may assume $e \in F$. Given any $o_1,o_2 \in O$, $y \in \{0,1\}^{\Gamma}$, and applying the specification property to the set \(
K_0(y)=\{\gamma\in\Gamma:\ y(\gamma^{-1}\eta)=0
\text{ for every }\eta\in F\}
\), yields some $o\in O$ such that:
\begin{equation} \label{eqn:interpolate}
\begin{array}{ll}
d(\gamma o,\gamma o_1) \le \delta/2 & {\text{ whenever }} y(\gamma^{-1}\eta) = 0, \ \forall \eta\in F \\
d(\gamma o,\gamma o_2) \le \delta/2 &{\text{ when }} y(\gamma^{-1}\eta) = 1, \  \forall\eta \in F.
\end{array}
\end{equation} 
We will say that such an $o\in O$, ``$\delta/2$-interpolates $o_1$ and $o_2$ according to $y$''.  

Appealing to Glasner--Weiss \cite{GW:97}\footnote{This step only requires that $\G$ does not have property $(T)$.} choose a weakly mixing
compact metrizable \(\Gamma\)-system \((X,\mu)\) and a clopen set
\(A\subset X\), with \(\mu(A)=1/2\), such that, for the equivariant map
\[
\varphi:X\to\{0,1\}^{\Gamma},
\qquad
\varphi(x)_\gamma=1_A(\gamma^{-1}x),
\]
the sets
\[
B_1=\{x:\varphi(x)_\eta=0\text{ for all }\eta\in F\},
\qquad
B_2=\{x:\varphi(x)_\eta=1\text{ for all }\eta\in F\},
\]
and
\[
B_0=X\setminus(B_1\cup B_2)
\]
satisfy
\begin{equation} \label{eqn:Biest}
\left|\mu(B_i)-\frac12\right|< \frac{\epsilon}{12M} \
{\text{for }} i=1,2, \ {\text{and,}} \qquad
\mu(B_0) \leq \frac{\epsilon}{12M}.
\end{equation}

Let $Z = X \times O \times O \times O$, with $\pi^{1,2,3}:Z \to X \times O \times O$, and $\pi^4:Z \to O$ denoting the respective projections on the first three and on the fourth coordinates. Let $\varphi: X \rightarrow \{0,1\}^{\Gamma}$ be the $\Gamma$-equivariant function such that $\varphi(x)_{\gamma} = 1_{A}(\gamma^{-1} x)$. We also denote $\overline{B}_i = B_i \times O \times O \times O \subset Z$ for $i \in \{0,1,2\}$. Note that $Z = \overline{B}_0 \sqcup \overline{B}_1 \sqcup \overline{B}_2$ forms a partition of $Z$. 

Let $P = P_{\mu,\lambda,\delta}$ denote the set of all $\theta \in \Prob(Z)$ that satisfy the following two properties: 
\begin{itemize}
\item $\pi^{1,2,3}_*(\theta)=\mu \times \lambda$, 
\item for $\theta$-almost every $(x,o_1,o_2,o) \in Z$, 
$o$ $\delta/2$-interpolates $o_1$ and $o_2$ according to $\varphi(x)$.
\end{itemize}
Clearly $P$ is a $\Gamma$-invariant compact convex subset of $\Prob(Z)$. We will conclude by establishing the following properties which, together, yield the desired ergodic measure in $U$:
\begin{enumerate}[label=\alph*)]
\item \label{itm:face} $P^{\Gamma}$ is a nonempty face of $\Prob^{\Gamma}(Z)$.
\item \label{itm:image} $\pi^4_* (P^{\Gamma}) \subset U$. 
\end{enumerate}

We first show that $P^{\Gamma}$, if nonempty, is a face of $\Prob^{\Gamma}(Z)$. Assume therefore that $\theta = \beta \theta_1 + (1-\beta) \theta_2$
with $\theta \in P^{\Gamma}$, $\theta_1,\theta_2 \in \Prob^{\Gamma}(Z)$ and $\beta \in [0,1]$. Since $\mu$ is weakly mixing and $\lambda$ is ergodic we know that $\mu \times \lambda$ too is ergodic. It follows directly that $\pi^{1,2,3}(\theta_i) = \mu \times \lambda$ for both $i \in \{1,2\}$.
Let $R \subseteq Z$ be the set of points $(x,o_1,0_2,0)$ such that $o$ $\delta/2$-interpolates $o_1$ and $o_2$ according to $\varphi(x)$. By the assumption that $\theta \in O$ it follows that $\theta(R)=0$ it follows that $\beta \theta_1(R)+(1-\beta)\theta_2(R)=1$, so $\theta_1(R)=\theta_2(R)=1$ unless $\beta \in \{0,1\}$.

To establish \ref{itm:face}, it remains to show that $P^{\Gamma} \ne \emptyset$. Since $\Gamma$ is amenable and $P$ is a convex compact $\Gamma$-space it would be enough to show that $P \ne \emptyset$.  
Let $O^*$ denote the space of closed subsets of $O$ (with the Fell topology, induced by the Hausdorff metric). Consider the map $\Phi_0: X \times O \times O  \to O^*$ defined by 
 \[
 \Phi_0(x,o_1,o_2) = \left\{ o \in O:~ o ~ \frac{\delta}{2} \mbox{-interpolates } o_1 \mbox{ and } o_2 \mbox{ according to } \varphi(x)\right\},\]
 for $o_1,o_2 \in O$ and $x \in X$, the specification property implies that $\emptyset \ne \Phi_0(x,o_1,o_2)$ is non-empty for every $o_1,o_2 \in O$ and $x \in X$. It can be directly verified that $\Phi_0(x,o_1,o_2)$ is a closed subset of $O$. Furthermore, $\Phi_0:X \times O \times O  \to O^*$ is a clearly a Borel function. By the Kuratowski--Ryll-Nardzewski measurable selection theorem, there exists a Borel map
\(\Psi:O^*\to O\) such that \(\Psi(A)\in A\) for every non-empty closed set \(A\subseteq O\). Let $\Phi:X \times O \times O \to O$ be given by $\Phi = \Psi \circ \Phi_0$, this is a Borel map. Note that $\Phi$ is not equivariant because $\Psi$ is not generally equivariant. Nevertheless the push-forward of \(\mu\times\lambda\) under
\[
(x,o_1,o_2)\mapsto (x,o_1,o_2,\Phi(x,o_1,o_2))
\]
belongs to \(P\). This yields our desired measure, which completes the proof of \ref{itm:face}.

To prove \ref{itm:image} fix $\theta \in P^\Gamma$ and set $\nu = \pi^4_*(\theta)$ and fix some $f \in Q$. Then
\begin{equation} \label{eqn:split}
\int_O f(o) d\nu(o) = \int_Z f(o) d\theta(x,o_1,o_2,o) = \int_{\overline{B}_0} f(o) d\theta + \int_{\overline{B}_1} f(o) d\theta +\int_{\overline{B}_2} f(o) d\theta, 
\end{equation}
By Equation (\ref{eqn:Biest}) 
\begin{equation} \label{eqn:B0est}
\abs{\int_{\overline{B}_0}f(o)d\theta} < \frac{\epsilon}{6}.
\end{equation} 
Since \(\pi^{1,2,3}_*\theta=\mu\times\lambda\), and \(B_i\) depends only
on the first coordinate,
\[
\int_{\overline B_i} f(o_i)\,d\theta
=
\mu(B_i)\int_O f\,d\nu_i.
\]
On \(\overline B_i\), the interpolation condition gives
\(d(o,o_i)\leq\delta/2\), and therefore
\[
|f(o)-f(o_i)|<\frac{\epsilon}{6}.
\]
Thus
\begin{equation} 
\begin{aligned} \label{eqn:B12est}
\left|
\int_{\overline B_i} f(o)\,d\theta
-
\frac12\int_O f\,d\nu_i
\right|
&\leq
\int_{\overline B_i}|f(o)-f(o_i)|\,d\theta
+
\left|\mu(B_i)-\frac12\right|
\left|\int_O f\,d\nu_i\right|  \\
&<
\frac{\epsilon}{6}+\alpha M
\leq
\frac{\epsilon}{6}+\frac{\epsilon}{12}
<
\frac{\epsilon}{3}.
\end{aligned}
\end{equation}

Now \ref{itm:image} follows directly from Equations (\ref{eqn:split}), (\ref{eqn:B0est}) and (\ref{eqn:B12est}).
To conclude the proof take $\theta$ to be an extreme point of $P^{\Gamma}$. By \ref{itm:face} $P^{\Gamma}$ is a face of $\Prob^{\Gamma}(Z)$ so $\theta$ is in fact an ergodic $\Gamma$-invariant measure on $Z$. By \ref{itm:image} $\pi^4_*(\theta) = \nu$ is the desired ergodic measure in $U = U(Q,\epsilon)$.

\end{proof}

\begin{remark}\label{rem:continuous_selection}
In general, there does not exist a \emph{continuous} selection function $\Psi:O^* \to O$ such that $\Psi(A) \in A$ for every non-empty $A \in O^*$ (for instance take $O=\R/\Z$ to be the $1$-dimensional torus, and restrict to pairs of antipodal points, namely sets of the form $A=\{t,t+\frac{1}{2}\}$.
In a previous version of this paper it was falsely claimed that continuous selection functions exist (if $O$ can be topologically embedded  in the unit interval, then the function $\Psi(A):=\min(A)$ does the job).
\end{remark}

\section{Further discussion and open questions}\label{sec:open_questions}
We conclude with a discussion of some further directions, questions and related problems.

\subsection{Extension of random orders}
A countable group $\G$ has the \emph{IRO-extension property} if every partial invariant random order on $\G$ can be extended to a random invariant (total) order. As mentioned earlier, amenable groups have the IRO-extension property. In \cref{sec:SL3Z_nonextendable} we showed that  $\SL_3(\Z)$ does not have the IRO-extension property, providing a first example for a countable group for which this property fails. 
\begin{quest}\label{quest:IRO-extension}
Does there exist a non-amenable group with the IRO-extension property? 
\end{quest}
Following the first arXiv version of our paper, \Cref{quest:IRO-extension} was solved negatively by Andrei Alpeev \cite{alpeev2022IRO_ext}. Thus showing that the IRO-extension property is equivalent to amenability for countable groups. It is interesting to note that Alpeev uses our construction as one of the building blocks for his theorem.

\subsection{Equivariant orderability and realization of probability preserving actions  as IROs}
In \cref{sec:ord_ergodic_universality} we showed for any point in  $\Ord(\Gamma)$ the stabilzer subgroup of $\Gamma$ is always  orderable. We also showed that a seemingly slightly stronger property is necessary and sufficient for a (non-atomic) ergodic probability preserving action to be realizable as an IRO.

Call an invariant random subgroup $\mu \in \IRS(\Gamma)$ \emph{orderable} if  it is supported on orderable subgroups, and  \emph{equivariantly orderable} if there exists a measurable equivariant function $\pi:\Sub(\Gamma) \to \pOrd(\Gamma)$ such that for almost every $\Delta \in \Sub(\Gamma)$ (with respect to $\mu$), the restriction of $\pi(\Delta) \in \pOrd(\Gamma)$ to $\Delta$ is an invariant total order.

	
A positive answer to the following question would yield a simplified  characterization of  probability preserving actions realizable as IRO's.
\begin{quest}\label{quest:equvaiant_IRO}
Is every orderable invariant random subgroup equivariantly orderable?
\end{quest}

The argument given in the proof of \cref{orederable_realization} shows that  any IRO of an orderable group $\G$ is equivariantly orderable. Thus, a positive solution to the question below would immediately imply a positive solution to \cref{quest:equvaiant_IRO}
\begin{quest}\label{quest:IRO_orderable}
Suppose $\Gamma$ admits an ergodic IRS which is almost surely orderable and spanning in the sense that that $\Gamma$ is the smallest normal subgroup which contains all the subgroups in the support of the IRS. Is $\G$ necessarily orderable?
\end{quest}
Both ergodicity and the spanning assumptions are necessary here, as the following examples show:
\begin{example}
Let $\Gamma$ be a finitely generated group which is not left orderable, such as $\SL_3(\Z)$. By finite generation we can pick an epimorphism $\phi:F_n \rightarrow \Gamma$, where $F_n$ is the free group on $n$-generators. Let $\Delta = \{(x,x) \ | \ x \in F_n\} \lhd F_n \times F_n$ be the diagonal copy of $F_n$ in the product, and $N = \{(x,x) \ | \ x \in \ker(\phi)\}$. Now set $G = (F_n \times F_n)/N$, and $\tilde G = G \rtimes C_2$ where the cyclic group $C_2$ acts via the obvious involution $(x,y)N \mapsto (y,x) N$. Let $F_1 = \{(x,e)N \ | \ x \in F_n\}, F_2 = \{(e,x)N \ | \ x \in F_n\}$ be the (injective) images of the two free factors in $G$ and finally consider the IRS 
$$\mu = \frac{\delta_{F_1} + \delta_{F_2}}{2}.$$
This is an IRS in both groups $G < \tilde{G}$. Indeed $F_1,F_2 \lhd G$ so that the dirac measures $\delta_{F_i}$ are $G$-invariant. In $\tilde{G}$ these two are flipped by the involution so $\mu$ is still invariant. $\mu$ is supported on free groups, which are definitely left orderable, but the groups $G,\tilde{G}$ themselves fail to be left orderable because they contain an isomorphic copy of $\Gamma \cong \frac{\Delta}{N}$. These two examples just fall short from giving a counter example to Question \ref{quest:equvaiant_IRO}. As an IRS on $G$, $\mu$ is spanning but fails to be ergodic; on $\tilde{G}$, $\mu$ is ergodic, but spans only $G$. 
\end{example}

\subsection{Strong non-orderablity}
Say that a countable group $\G$ is \emph{strongly non-orderable} if every $\mu \in \IRO(\G)$ induces an  essentially free probability preserving $\G$-action. In particular, a strongly non-orderable group $\G$  does not admit  left-orderable subgroups of finite index.


The Stuck Zimmer theroem \cite{StuckZimmer94} combined with the recent proof of Hurtado and Deroin that all higher rank irreducible lattices are non-orderable \cite{deroinHurtado2020} gives rise to the following.
\begin{cor}
Let $\Gamma < G$ be an irreducible lattice in a higher rank semisimple Lie group $G$ with property (T). Then $\Gamma$ is strongly non-orderable. 
\end{cor}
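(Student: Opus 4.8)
The plan is to deduce this from the two results quoted in the statement together with \cref{prop:stab_orderable}. Fix an arbitrary $\mu \in \IRO(\Gamma)$; the goal is to show that the left-multiplication action $\Gamma \curvearrowright (\Ord(\Gamma),\mu)$ is essentially free, i.e.\ that the $\Gamma$-invariant Borel set $Z := \{\prec \in \Ord(\Gamma) : \stab(\prec) \ne \trivgp\}$ is $\mu$-null. First I would reduce to the ergodic case: since $Z$ is $\Gamma$-invariant and Borel, it is $\mu$-null if and only if it is null for $\nu$-almost every ergodic component $\mu_\omega$ in the ergodic decomposition $\mu = \int \mu_\omega\, d\nu(\omega)$, and each $\mu_\omega$ is again an ergodic element of $\IRO(\Gamma)$; so it suffices to prove the statement for ergodic $\mu$.

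Next I would invoke the Stuck--Zimmer theorem \cite{StuckZimmer94} for the ergodic probability-preserving action $\Gamma \curvearrowright (\Ord(\Gamma),\mu)$: because $\Gamma$ is an irreducible lattice in a higher-rank semisimple Lie group with property (T), this action is either essentially free --- in which case we are done --- or essentially transitive. In the essentially transitive case, $\mu$ is the normalized counting measure on a single finite orbit $\Gamma\cdot\prec_0$ (a transitive $\Gamma$-set carrying an invariant probability measure must be finite), and in particular $\Delta := \stab(\prec_0)$ has finite index in $\Gamma$.

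Finally I would rule out the essentially transitive case. By \cref{prop:stab_orderable} the restriction of $\prec_0$ to $\Delta$ is a left-invariant total order, so $\Delta$ is a left-orderable group. But a finite-index subgroup of an irreducible lattice in $G$ is itself an irreducible lattice in $G$, hence a higher-rank irreducible lattice, and such groups are not left-orderable by Deroin--Hurtado \cite{deroinHurtado2020} (in the subcase $\Delta = \Gamma$, where the conull orbit is the single fixed point $\prec_0$, this is literally the assertion that $\Gamma$ itself is orderable). This contradiction forces the action to be essentially free. I expect no genuine difficulty here beyond correctly invoking the two cited theorems; the only points needing care are the elementary observations that an essentially transitive p.m.p.\ action is carried by a finite orbit and that Deroin--Hurtado applies to the finite-index subgroup $\Delta$, so that --- if one insists on naming a main obstacle --- it resides entirely in the external inputs \cite{StuckZimmer94,deroinHurtado2020} and not in the synthesis.
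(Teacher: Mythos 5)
Your argument is correct and follows essentially the same route as the paper: reduce to an ergodic IRO, apply Stuck--Zimmer to conclude the action is either essentially free or supported on a finite orbit, and in the latter case derive a contradiction with Deroin--Hurtado via the left-orderability of the finite-index stabilizer (\cref{prop:stab_orderable}). You simply spell out the details (ergodic decomposition, passing to the finite-index subgroup) that the paper leaves implicit.
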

\begin{proof}
Assume that $\prec \in \IRO(\Gamma)$ be an ergodic IRO which is not essentially free. The Stuck-Zimmer theorem implies that this IRO is supported on a finite orbit, and hence every order in $\supp(\mu)$ is fixed by a finite index subgroup of $\Gamma$, contradicting the main result of \cite{deroinHurtado2020}.   
\end{proof}

Finite groups are obviously strongly non-orderable, as is any normal subgroup of a strongly non-orderable group.

To the best of our knowledge, It is currently not known if a left orderable group can ever satisfy Kazhdan’s property $(T)$. 
In view of the above, we formulate the following question:
\begin{quest}\label{quest:property_T_strongly_non_orederable}
Is any group with Kazhdan’s property $(T)$ strongly non-orderable?
\end{quest}



\bibliographystyle{amsplain}
\bibliography{IRO}
\end{document}